\PassOptionsToPackage{dvipsnames}{xcolor}
\documentclass{article}

\usepackage{amsmath,amsthm,amsfonts,amssymb}
\usepackage{todonotes}
\usepackage{mathtools}
\usepackage{latexsym}
\usepackage{tikz,tikz-cd}
\usepackage{comment}
\usepackage{tikzit}				

\tikzstyle{morphism}=[fill=white, draw=black, shape=rectangle]
\tikzstyle{medium box}=[fill=white, draw=black, shape=rectangle, minimum width=0.7cm, minimum height=0.7cm]
\tikzstyle{large morphism}=[fill=white, draw=black, shape=rectangle, minimum width=1.7cm, minimum height=1cm]
\tikzstyle{bn}=[fill=black, draw=black, shape=circle, inner sep=1.5pt]
\tikzstyle{state}=[fill=white, draw=black, regular polygon, regular polygon sides=3, minimum width=0.8cm, shape border rotate=180, inner sep=0pt]
\tikzstyle{medium state}=[fill=white, draw=black, regular polygon, regular polygon sides=3, minimum width=1.3cm, inner sep=0pt, shape border rotate=180]
\tikzstyle{large state}=[fill=white, draw=black, regular polygon, regular polygon sides=3, minimum width=2.2cm, shape border rotate=180, inner sep=0pt]
\tikzstyle{wide state}=[fill=white, draw=black, shape=isosceles triangle, minimum width=0.8cm, shape border rotate=270, inner sep=1.4pt, minimum height=0.5cm, isosceles triangle apex angle=80]
\tikzstyle{wn}=[fill=white, draw=black, shape=circle, inner sep=1.5pt]
\tikzstyle{blue morphism}=[fill=white, draw={rgb,255: red,15; green,0; blue,150}, shape=rectangle, text={rgb,255: red,15; green,0; blue,150}, tikzit category=blue]
\tikzstyle{blue state}=[fill=white, draw={rgb,255: red,15; green,0; blue,150}, shape=circle, regular polygon, regular polygon sides=3, minimum width=0.8cm, shape border rotate=180, inner sep=0pt, text={rgb,255: red,15; green,0; blue,150}, tikzit category=blue]
\tikzstyle{blue node}=[fill={rgb,255: red,15; green,0; blue,150}, draw={rgb,255: red,15; green,0; blue,150}, shape=circle, tikzit category=blue, inner sep=1.5pt]
\tikzstyle{blue}=[text={rgb,255: red,15; green,0; blue,150}, tikzit draw={rgb,255: red,191; green,191; blue,191}, tikzit category=blue, tikzit fill=white, inner sep=0mm]
\tikzstyle{blue wide state}=[fill=white, draw={rgb,255: red,15; green,0; blue,150}, text={rgb,255: red,15; green,0; blue,150}, shape=isosceles triangle, minimum width=0.8cm, shape border rotate=270, inner sep=1.4pt, minimum height=0.5cm, isosceles triangle apex angle=80]
\tikzstyle{red node}=[fill={rgb,255: red,150; green,0; blue,2}, draw={rgb,255: red,150; green,0; blue,2}, shape=circle, inner sep=1.5pt]
\tikzstyle{Purple node}=[fill={rgb,255: red,150; green,0; blue,150}, draw={rgb,255: red,150; green,0; blue,150}, shape=circle, inner sep=1.5pt]
\tikzstyle{red}=[text={rgb,255: red,150; green,0; blue,2}, inner sep=0mm, tikzit fill=white, tikzit draw={rgb,255: red,191; green,191; blue,191}]
\tikzstyle{purple}=[text={rgb,255: red,150; green,0; blue,150}, inner sep=0mm, tikzit fill=white, tikzit draw={rgb,255: red,191; green,191; blue,191}]
\tikzstyle{white morphism}=[fill=white, draw=white, shape=rectangle, tikzit draw={rgb,255: red,139; green,139; blue,139}]

\tikzstyle{arrow}=[->]
\tikzstyle{dashed box}=[-, dashed]
\tikzstyle{blue arrow}=[-, draw={rgb,255: red,15; green,0; blue,150}, tikzit category=blue]
\tikzstyle{mapsto}=[{|->}]
\tikzstyle{double wire}=[-, double]
\tikzstyle{curly brace}=[-, draw=none, tikzit draw={rgb,255: red,128; green,0; blue,128}]

\usetikzlibrary{decorations.pathmorphing,calc}
\usepackage{enumitem}
\usepackage[T1,T2A]{fontenc}			
\usepackage{cite}
\usepackage{tikz-cd}
\usepackage{mathpartir}
\usepackage{stmaryrd}

\definecolor{myurlcolor}{rgb}{0,0,0.3}
\definecolor{mycitecolor}{rgb}{0,0.3,0}
\definecolor{myrefcolor}{rgb}{0.3,0,0}
\usepackage[pagebackref,draft=false]{hyperref}
\hypersetup{colorlinks,
linkcolor=myrefcolor,
citecolor=mycitecolor,
urlcolor=myurlcolor}

\usepackage[capitalize]{cleveref}



\newtheorem{theorem}{Theorem}[section]
\newtheorem{proposition}[theorem]{Proposition}
\newtheorem{lemma}[theorem]{Lemma}
\newtheorem{corollary}[theorem]{Corollary}
\newtheorem{definition}[theorem]{Definition}

\theoremstyle{definition}
\newtheorem{example}[theorem]{Example}

\newcommand{\N}{\mathbb{N}}

\newcommand{\R}{\mathbb{R}}

\newcommand{\inv}{\mathrm{inv}}		






\newcommand{\cat}[1]{{\mathsf{#1}}} 



\newcommand{\id}{\mathrm{id}} 		


\tikzset{pullback/.style={minimum size=1.2ex,path picture={	
			\draw[opacity=1,black,-,#1] (-0.5ex,-0.5ex) -- (0.5ex,-0.5ex) -- (0.5ex,0.5ex);%
}}}

\renewcommand{\det}{\mathrm{det}}	

\newcommand{\Stoch}{\mathsf{Stoch}}

\newcommand{\as}[1]{
		\def\relstate{#1}%
		\ifx\relstate\empty
		  \text{a.s.}%
		\else
		  {#1\text{-a.s.}}%
		\fi
	}
\newcommand{\ase}[1]{=_{#1\text{-a.s.}}}					


\DeclareMathOperator{\cop}{copy}
\DeclareMathOperator{\del}{del}

	
	\DeclarePairedDelimiterX{\Set}[1]{\{}{\}}{%
		
		#1
	}
	\makeatletter
		\let\oldSet\Set
		\def\Set{\@ifstar{\oldSet}{\oldSet*}}
	\makeatother

	\DeclarePairedDelimiterX{\Family}[1]{(}{)}{%
		
		#1
	}
	\makeatletter
		\let\oldFamily\Family
		\def\Family{\@ifstar{\oldFamily}{\oldFamily*}}

\tikzset{commutative diagrams/kl/.style={rightsquigarrow}}
\newcounter{sarrow}

\title{A category-theoretic proof of the ergodic decomposition theorem}

\author{Sean Moss and Paolo Perrone}

\begin{document}

\maketitle

\begin{abstract}
 The ergodic decomposition theorem is a cornerstone result of dynamical systems and ergodic theory. It states that every invariant measure on a dynamical system is a mixture of ergodic ones.
 Here we formulate and prove the theorem in terms of string diagrams, using the formalism of Markov categories. 
 We recover the usual measure-theoretic statement by instantiating our result in the category of stochastic kernels.
 Along the way we give a conceptual treatment of several concepts in the theory of deterministic and stochastic dynamical systems. 
 In particular,
 \begin{itemize}
  \item ergodic measures appear very naturally as particular cones of deterministic morphisms (in the sense of Markov categories);
  \item the invariant $\sigma$-algebra of a dynamical system can be seen as a colimit in the category of Markov kernels.
 \end{itemize}
 In line with other uses of category theory, once the necessary structures are in place, our proof of the main theorem is much more intuitive than traditional approaches. In particular, it does not use any quantitative limiting arguments, and it does not rely on the cardinality of the group or monoid indexing the dynamics.
 We hope that this result paves the way for further applications of category theory to dynamical systems, ergodic theory, and information theory.
\end{abstract}

\tableofcontents

\section{Introduction}

In recent years there has been growing interest in the mathematics and computer science communities about expanding the formalism of measure theory. The goal is to capture a wider range of structures and phenomena in probability theory and related fields such as statistics and information theory. 

On one hand, there is interest in moving measure theory beyond the need of countability, for example in the work of Jamneshan and Tao~\cite{uncountable-measure-theory}, and in their work with others in expanding ergodic theory in that direction~\cite{uncountable-19,uncountable-20,uncountable-21,uncountable-22}.

In addition, there has been work in translating the basic ideas of probability theory into an abstract, axiomatic formalism, of which the traditional measure-theoretic probability is a concrete instance. This approach is sometimes called \emph{categorical probability}, and it is mostly done by means of \emph{Markov categories}. 
In their current form they were defined in~\cite{fritz2019synthetic}, with some of the concepts already present in earlier work such as~\cite{chojacobs2019strings} for ``GS monoidal'' or ``CD'' categories, a slightly more general structure. (The first definitions, in a different context, date back at least to~\cite{gadducci-thesis} --- see also~\cite[Section~1]{freeGScat} for a more detailed overview.)

Categorical probability is an example of a \emph{synthetic} theory, as opposed to analytic. To clarify the terms, here is a classical analogy. The geometry of the plane can be studied \emph{synthetically} starting from axioms such as Euclid's ones, or one can do \emph{analytic geometry}, in the sense of Descartes, doing calculations in coordinates. Since $\R^2$ satisfies Euclid's axioms, analytic geometry is indeed a model of Euclidean geometry, and in order to prove a theorem, one could use either approach, both methods having advantages and disadvantages.

Similarly, with Markov categories one first formulates some fundamental axioms for probability theory. The theorems of traditional probability theory and statistics can then be recast in a more general and abstract categorical form, and proven purely in terms of these axioms, focusing on the conceptual aspects, and without relying on the specific properties of the objects of the category in question (such as cardinality or separability). 
The category $\cat{Stoch}$ of Markov kernels (see \Cref{basicmarkov}) is an example of a Markov category, and one can obtain the traditional results of probability by instantiating the abstract versions in $\cat{Stoch}$ or in one of its subcategories.
Several results of probability theory have recently been reproven in this way, for example the Kolmogorov and Hewitt-Savage zero-one laws~\cite{fritzrischel2019zeroone} and the de Finetti theorem~\cite{definetti-markov,ours_LICS}. 

In computer science there is interest in finding an alternative to, or an extension of, traditional measure theory, in order to talk about \emph{random functions}, in the sense of \emph{random elements of a function space}. This is known to be impossible in traditional measure theory~\cite{notccc}, and so additional theory is needed. A recently defined structure which solves that problem is \emph{quasi-Borel spaces}~\cite{heunen-kammar-staton-yang-a-convenient-category-for-higher-order-probability-theory}. Quasi-Borel spaces have interesting properties which may sound counterintuitive if one comes from traditional measure theory, and the Markov category formalism helps to elucidate the conceptual differences~\cite{sabok-staton-stein-wolman-probabilistic-programming-semantics-for-name-generation}.

In this work we start expanding the approach of categorical probability to ergodic theory. We focus on one particular result, the \emph{ergodic decomposition theorem}, which can be roughly stated as ``every invariant measure of a deterministic dynamical system can be written as a convex mixture of ergodic ones''~\cite[Section~5]{viana-oliveira}.
While the traditional form of the theorem, relying on the notions of convexity and almost-sure equality, seems to be very specific to the measure-theoretic formalism, we show that both the statement and the proof can be rewritten in terms of category theory, with most of the conceptual steps used in the proof (such as disintegrations of measures) being already well studied in terms of Markov categories. 
In particular, the notion of ``convex mixture'' can be interpreted as categorical composition (see~\Cref{mixtures}).

An interesting feature of this approach is that the very definition of ergodic measure sits very naturally within the Markov category formalism. 
Indeed, Markov categories come with a notion of \emph{deterministic states} (see \Cref{defdet}) which, when instantiated in $\cat{Stoch}$, give exactly the \emph{zero-one measures}, those probability measures which assign to each event probability either zero or one.
Since ergodic measures are traditionally (equivalently) defined as measures which are zero-one on the $\sigma$-algebra of invariant sets, we can redefine them categorically, and more generally, as particular deterministic states (see~\Cref{ergodicstates}). Invariant sets also have a natural categorical characterization, since the invariant $\sigma$-algebra satisfies a particular universal property in the category of Markov kernels (see \Cref{markovquotient} and \Cref{weakquotient}).

In the case of deterministic dynamical systems one can talk about ergodic measures either in terms of sets which are invariant in the strict sense (see~\Cref{definvset}), or only up to measure zero \cite[Theorem~5.1.3]{viana-oliveira}, and the notions are equivalent for a large class of systems (see for example \cite[Theorem~3]{tao}). 
In this work we focus on the strict approach. 
A Markov-categorical formalism to treat morphisms up to almost-sure equality exists~\cite[Definition~13.8]{fritz2019synthetic}, and it may open an interesting new approach, which for now we leave to future work.

\paragraph{Outline.}
In \Cref{background} we start by giving some general background by explaining how to write some notions of dynamical systems in a category-theoretic way (\Cref{dynsyscat}). We then recall the main definitions and constructions of Markov categories (\Cref{basicmarkov}), and use them to express some concepts of probability (\Cref{probmarkov}) and dynamical systems (\Cref{dynsysmarkov}). Most of the material here is well known, except from the last section.

The main original contributions of this work are in \Cref{ergodic}. In particular, in \Cref{mixtures} we give a categorical definition of ``mixtures'' or ``convex combinations''. In \Cref{markovquotient} we give a categorical characterization of invariant sets, giving a definition that can work in more general categories than Markov kernels.
In \Cref{ergodicstates} we then express ergodicity of states categorically, in a way that generalizes the usual definition of ``assuming only values zero and one on invariant sets''.
Our main result, a synthetic version of the ergodic decomposition theorem (\Cref{mainthm}), is stated and proven in \Cref{mainsec}, together with its instantiation in traditional measure theory (\Cref{mainstoch}). 

In \Cref{weakquotient} make mathematically precise the intuition that ``the invariant $\sigma$-algebra is a weak analogue of a space of orbits'', using universal properties in the category of Markov kernels.

\paragraph{Acknowledgements.}
We would like to thank Tobias Fritz, Tomáš Gonda and Dario Stein for the interesting discussions on Markov categories, and Sharwin Rezagholi for the inspiring conversations on dynamical systems and ergodic theory.
We would also like to thank Sam Staton and his research group for the support and for the helpful feedback.

\section{Background}\label{background}

Some aspects of the theory of dynamical systems lend themselves very well to a category-theoretic treatment. 
Here we first look at some of the ideas of dynamical systems that can be formalized in terms of categories, in particular the ideas of invariant states and observables, that can be thought of as particular cones and cocones. (There is more to be said about dynamical systems and categories, a good starting point could be \cite{dresden}.)

We then recall the basic definitions and results of Markov categories, which will be used in the rest of this work, and we review some of the main probabilistic concepts which can be expressed in terms of Markov categories. In \Cref{dynsysmarkov} we turn to some structures involving dynamical systems in Markov categories which are, as far as we are aware, first defined in this work. 

\subsection{Dynamical systems and categories}\label{dynsyscat}

In ergodic theory and related fields, one is mostly interested in the following two types of dynamical system:
\begin{enumerate}
 \item A set or space $X$ with some structure (topology, measure, etc.), and a map or kernel $t:X\to X$ preserving that structure;
 \item A set or space $X$ with some structure, and a group acting on $X$ in a structure-preserving way.
\end{enumerate}
Both dynamics are encompassed by the notion of a monoid: every group is a monoid, and every map $t:X\to X$ generates a monoid via iterations, $\{\id,t,t^2,\dots\}$.

In terms of category theory, a monoid is equivalently a category with a single object. Given a monoid $M$, denote by $\cat{B}M$ the category with a single object, denoted by $\bullet$, and with the set of arrows $\bullet\to\bullet$ given by $M$, with its identity and composition. 

Let now $\cat{C}$ be a category (for example, the category $\cat{Meas}$ of measurable spaces and measurable maps). 
A dynamical system can be modeled as a functor $\cat{B}M\to \cat{C}$. Let's see this explicitly. Such a functor maps
\begin{itemize}
 \item The unique object $\bullet$ of $\cat{B}M$ to an object $X$ of $\cat{C}$ (for example, a measurable space). This is the object (or ``space'') where the dynamics takes place;
 \item Each arrow of $\cat{B}M$, i.e.~each element $m$ of the monoid $M$, to an ``induced'' arrow $X\to X$ (for example, a measurable map). This is the dynamics. 
\end{itemize}
For brevity, we denote the dynamical system just by $X$ whenever this does not cause ambiguity, and we denote the map $X\to X$ induced by $m\in M$ again by $m$, writing $m:X\to X$. 

If $M=\N$, the monoid is generated by the number $1$, and so the dynamics is generated by the arrow induced by $1$ (for example, a measurable map), which then is iterated. We usually denote the resulting map by $t:X\to X$. 

Here are other examples of categories $\cat{C}$:
\begin{itemize}
 \item If $\cat{C}$ is the category of compact Hausdorff spaces and continuous maps, a functor $\cat{B}M\to \cat{C}$ is a (compact) topological dynamical system.
 \item If $\cat{C}$ is the category of measure spaces and measure-preserving maps, a functor $\cat{B}M\to \cat{C}$ is a measure-preserving dynamical system.
 \item If $M=\N$ and $\cat{C}$ is the category of measurable spaces and Markov kernels, a functor $\cat{B}M\to \cat{C}$ is a (discrete-time) Markov chain.
\end{itemize}

One of the most useful contributions of the categorical formalism to dynamical systems is a systematic treatment of invariant states and observables. 
Consider a dynamical system on the object $X$ indexed by the monoid $M$. A \emph{cone} over $X$ is an object $C$ together with an arrow $c:C\to X$ such that for every $m\in M$, the following diagram commutes.
\begin{equation}\label{cone}
\begin{tikzcd}[column sep=small]
 & C \ar{dl}[swap]{c} \ar{dr}{c} \\
 X \ar{rr}[swap]{m} && X
\end{tikzcd}
\end{equation}
In general, cones over a given dynamical system have the interpretation of ``invariant states of some kind''. Indeed, in the category of sets and functions, if $C$ is a one-point set, the arrow $c:C\to X$ is the inclusion of some point $x\in X$, and the diagram \eqref{cone} just says that $x$ is a fixed point, i.e.\ $m(x)=x$. 
More generally, if $C$ is not a one-point set, the arrow $c:C\to X$ selects a $C$-indexed family of fixed points in $X$.
(Note that what matters here is that each individual point $c(i)$ is a fixed point, rather than any property of the range $\{ c(i) : i \in C \} \subseteq X$ of $c$ as a subset.)

Since the diagram \eqref{cone} can be written as $m\circ c = c$, sometimes one says that $m$ is a \emph{left-invariant} morphism (since $m$ acts on the left of $c$).
Note that the commutativity of diagram \eqref{cone} needs only to be checked on generators of $M$. For example, for $M=\N$, it suffices to check the condition for $1\in\N$, the map that we usually denote by $t:X\to X$.

A cone $C\to X$ over $X$ is \emph{universal}, or a \emph{limit}, if for every (other) cone $D\to X$ there is a unique arrow $D\to C$ such that for every $m\in M$, the following diagram commutes. 
$$
\begin{tikzcd}
 & D \ar{ddl} \ar{ddr} \ar[dotted]{d} \\
 & C \ar{dl} \ar{dr} \\
 X \ar{rr}[swap]{m} && X
\end{tikzcd}
$$
The limit cone $C$, if it exists, is unique up to isomorphism, and can be interpreted as the ``largest subspace of $X$ of invariant states''. In the category of sets, it is precisely the set of all invariant points (which can be empty), and in other categories it has a similar interpretation.
We denote the limit, if it exists, by $X^\inv$. 

Dually, a \emph{cocone} under the dynamical system $X$, or a \emph{right-invariant} morphism, is an object $R$ together with an arrow $r:X\to R$ such that for all $m\in M$, $f\circ m= f$, i.e.~the following diagram commutes. 
$$
\begin{tikzcd}[column sep=small]
 X \ar{rr}{m} \ar{dr}[swap]{r} && X \ar{dl}{r} \\
 & R
\end{tikzcd}
$$
This has the interpretation of an \emph{invariant function} or \emph{invariant observable}. In the category of sets, this is precisely a function with the property that $r(mx)=r(x)$, i.e.~it is constant on each orbit.

A cocone is universal, or a \emph{colimit}, if for every (other) cocone $X\to S$ there is a unique arrow $R\to S$ such that for every $m\in M$, the following diagram commutes.
$$
\begin{tikzcd}
 X \ar{rr}{m} \ar{dr} \ar{ddr} && X \ar{dl} \ar{ddl} \\
 & R \ar[dotted]{d} \\
 & S
\end{tikzcd}
$$
Again, this object, if it exists, is unique up to isomorphism, and it can be interpreted as the ``finest invariant observable''. In the category of sets, it is precisely the set of orbits, and the commutation of the last diagram says precisely that every invariant observable factors through the orbit.
In other categories the interpretation is similar (for example, for compact Hausdorff spaces one obtains the quotient by the smallest \emph{closed} equivalence relation which contains the orbits). 
We denote the colimit $X$, if it exists and up to isomorphism, by $X_\inv$.

Further categorical formalism for dynamical systems, similar in spirit to this section, can be found in \cite{dresden}, for the case of cartesian closed categories.

\subsection{Basic concepts of Markov categories}\label{basicmarkov}

Markov categories are a category-theoretic framework for probability and related fields. They allow us to express several conceptual aspects of probability theory (such as stochastic dependence and independence, almost-sure equality, and conditional distributions) in a graphical language, where the formalism takes care automatically of the measure-theoretic aspects. See~\cite{fritz2019synthetic} for more details.

The basic idea of a Markov category, which we will define shortly, is that of a category whose morphisms are ``probabilistic maps'' or ``transitions''. 
One of the most basic examples is the category $\cat{FinStoch}$, where
\begin{itemize}
 \item Objects are finite sets, which we denote by $X$, $Y$, etc.;
 \item Morphisms are \emph{stochastic matrices}. A stochastic matrix from $X$ to $Y$ is a function 
 $$
 \begin{tikzcd}[row sep=0]
  X\times Y \ar{r}{k} & {[0,1]} \\
  (x,y) \ar[mapsto]{r} & k(y|x)
 \end{tikzcd}
 $$
 such that for all $x\in X$ we have $\sum_{y\in Y} k(y|x)=1$.
 A possible interpretation is a transition probability from state $x$ to state $y$. 
 \item The composition of stochastic matrices is equivalently the \emph{Chapman-Kolmogorov formula}. For $k:X\to Y$ and $h:Y\to Z$, 
 $$
 h\circ k(z|x) = \sum_{y\in Y} h(z|y) \, k(y|x) .
 $$
\end{itemize}

The most important example is the category $\cat{Stoch}$, where
\begin{itemize}
 \item Objects are measurable spaces, which we denote as either $(X,\Sigma_X)$ or more briefly as $X$;
 \item Morphisms are \emph{Markov kernels}. A Markov kernel from $X$ to $Y$ is a function 
 $$
 \begin{tikzcd}[row sep=0]
  X\times \Sigma_Y \ar{r}{k} & {[0,1]} \\
  (x,B) \ar[mapsto]{r} & k(B|x)
 \end{tikzcd}
 $$
 such that 
 \begin{itemize}
  \item For each $x\in X$, the assignment $B\mapsto k(B|x)$ is a probability measure on $Y$;
  \item For each $B\in\Sigma_Y$, the assignment $x\mapsto k(B|x)$ is a measurable function on $X$.
 \end{itemize}
 A possible interpretation of the quantity $k(B|x)$ is the ``probability that the next state is in $B$ if the current state is $x$''.
 \item The composition of Markov kernels is given by the integral version of the Chapman-Kolmogorov formula. For $k:X\to Y$ and $h:Y\to Z$, and for each measurable $C\in \Sigma_Z$,
 $$
 h\circ k(C|x) = \int_{Y} h(C|y) \, k(dy|x) .
 $$
\end{itemize}
Every measurable function $f:X\to Y$ defines a ``deterministic'' Markov kernel $K_f$ as follows.
\begin{equation}\label{kernelfrommeasure}
F_f(B|x) \coloneqq \begin{cases}
                    1 & f(x) \in B \\
                    0 & f(x) \notin B
                   \end{cases}
\end{equation}
for each $x\in X$ and $B\in\Sigma_Y$.
This construction defines then a functor $K:\cat{Meas}\to\cat{Stoch}$ from measurable functions to Markov kernels.

Markov categories can be considered an abstraction of the category $\cat{Stoch}$, where the main categorical properties are formulated as axioms, and used to prove theorems of probability without having to use measure theory directly. 

One of the main structures of the categories $\cat{Stoch}$ and $\cat{FinStoch}$, which figures prominently in the definition of a Markov categories, is the concept of a \emph{monoidal} or \emph{tensor product}. 
The basic idea is that sometimes one wants to consider two systems, and talk about joint or composite states. Sometimes, the systems transition independently, and sometimes they interact. The categorical notion of a tensor product (generalizing, for example, the usual tensor of vector spaces) models this idea.
Indeed, given objects $X$ and $Y$, we want to form a ``composite'' object, denoted by $X\otimes Y$. In $\cat{Stoch}$, we take the cartesian product of measurable spaces, with the product $\sigma$-algebra. Moreover, given morphisms $k:X\to Y$ and $h:Z\to W$, we want a morphism $k\otimes h:X\otimes Z\to Y\otimes W$, with the interpretation that in this case, the dynamics is given by $k$ and $h$ independently on the two subsystems. This means that the tensor product is a functor of two variables, $\otimes:\cat{C}\otimes\cat{C}\to \cat{C}$.
In \emph{Stoch}, this is given by the (independent) product of Markov kernels, 
$$
k\otimes h(B,D|x,z) = k(B|x) \, h(D|z) 
$$
for all $x\in X$, $z\in Z$, $B\in\Sigma_Y$, and $D\in\Sigma_W$. 
It is helpful to use \emph{string diagrams} to represent these products. We draw a morphism $k:X\to Y$ as the following diagram, which should be read from bottom to top.
$$
\tikzfig{morphism}
$$
We can write the tensor product $k\otimes h$ by simply juxtaposing the two morphisms, as follows. 
$$
\tikzfig{tensor}
$$
This notation reflects the fact that the two subsystems do not interact. A general morphism between $X\otimes Z$ and $Y\otimes W$ will exhibit interaction and will not be in the form above --- we represent it as follows.
$$
\tikzfig{nontensor}
$$
Moreover, we need a \emph{unit} object $I$, which accounts for a ``trivial'' state. In $\cat{Stoch}$, this is the one-point measurable space.  
Markov kernels of the form $p:I\to X$ are equivalently just probability measures in $X$. In general, we call morphisms in this form \emph{states}, and denote them as follows.
$$
\tikzfig{state}
$$
We have associativity and unitality isomorphisms which resemble the axioms for a monoid, 
$$
(X\otimes Y)\otimes Z \cong X\otimes(Y\otimes Z) ,\qquad X\otimes I \cong X \cong I\otimes X ,
$$
and so, a category with this notion of product is called a \emph{monoidal category}.
A monoidal category is \emph{symmetric} if each product $X\otimes Y$ is isomorphic to $Y\otimes X$ in a very strong sense, so that for all practical purposes, the order of the factors does not matter. 
For the rigorous definition of a monoidal category, see for example \cite[Section~VII.1]{maclane-cwm1998}.
The categorical product with its usual universal property satisfies the axioms of a monoidal product, in that case one talks about a \emph{cartesian} monoidal category. The categories $\cat{Stoch}$ and $\cat{FinStoch}$ are symmetric monoidal, but not cartesian. In general, whenever randomness is involved, we do not want a cartesian category (see below for more on this).

Here is the rigorous definition of a Markov category. 

\begin{definition}
 A Markov category is a symmetric monoidal category $(\cat{C},\otimes,I)$, or more briefly $\cat{C}$, where
 \begin{itemize}
  \item Each object $X$ is equipped with maps $\cop:X\to X\otimes X$ and $\del:X\to I$, which we call ``copy and delete'' or ``copy and discard'', and we draw as follows;
  $$
  \tikzfig{copy_del}
  $$
  \item The following identities are satisfied (e.g.~``copying and deleting a copy is the same as doing nothing'');
  $$
  \tikzfig{coassoc} \qquad\qquad
  \tikzfig{counit}
  $$
  $$
  \tikzfig{cocomm}
  $$
  \item The copy and discard maps are compatible with tensor products in the following way;
  $$
  \tikzfig{comptens}
  $$
  \item The unit $I$ is the terminal object of the category (i.e.~the category is semicartesian). 
 \end{itemize} 
\end{definition}

In $\cat{Stoch}$, the copy and discard maps are the kernels obtained by the following measurable functions. 
The copy map corresponds from the diagonal map $X\to X\times X$ which literally ``copies the state'', $x\mapsto (x,x)$. The discard map corresponds to the unique map to the one-point space $X\to I$.

\subsection{Graphical definitions of probabilistic concepts}\label{probmarkov}

The formalism of Markov categories allows us to express several concepts of probability theory in categorical and graphical terms. 
The first notion we can look at is stochastic independence. 
First of all, notice that given a joint state $p:I\to X\otimes Y$, we can form the \emph{marginal} $p_X$ on $X$ by simply discarding $Y$.
$$
\tikzfig{margx}
$$
In $\cat{Stoch}$, this corresponds to saying that the marginal on $X$ is the pushforward of the measure $p$ along the projection map $X\times Y\to X$. 

\begin{definition}
 A joint state $p:I\to X\otimes Y$ is said to exhibit \emph{independence} of $X$ and $Y$ if the following holds. 
 \begin{equation}\label{ind}
 \tikzfig{ind}
 \end{equation}
 
 More generally, a morphism $p:A\to X\otimes Y$ is said to exhibit \emph{conditional independence} of $X$ and $Y$ given $A$ if the following holds.
 $$
 \tikzfig{cond_ind}
 $$
\end{definition}
In $\cat{Stoch}$ and $\cat{FinStoch}$ these corresponds to the usual notions. For example, formula \eqref{ind} in $\cat{FinStoch}$ corresponds to saying that 
$$
p(x,y) = p_X(x)\,p_Y(y)
$$
for all $x$ and $y$, and in $\cat{Stoch}$ corresponds to saying that
$$
p(A\times B) = p_X(A)\,p_Y(B)
$$
for all measurable sets $A\in\Sigma_X$ and $B\in\Sigma_Y$.

Another natural concept in Markov categories is the notion of a \emph{deterministic morphism}.

\begin{definition}\label{defdet}
 A  morphism $f:X\to Y$ in a Markov category $\cat{C}$ is called \emph{deterministic} if the following identity holds.
  \begin{equation}\label{det}
   \tikzfig{det} 
  \end{equation}
  We denote by $\cat{C}_\det$ the subcategory of $\cat{C}$ of deterministic morphisms.
\end{definition}
Intuitively, if $f$ carries nontrivial randomness, then the identity above cannot hold: on the left we have perfect correlation given $A$, and on the right we have conditional independence given $A$.

For a state $p:I\to X$, the condition \eqref{det} reads as follows.
\begin{equation}\label{detstate}
   \tikzfig{det-state} 
  \end{equation}
In $\cat{Stoch}$ and $\cat{FinStoch}$, these are exactly those measures (and kernels in the general case) which can only have values zero and one. Indeed, formula \eqref{detstate} says that for each pair of measurable sets $A,B\in \Sigma_X$, we have 
$$
p(A\cap B) = p(A)\,p(B) .
$$
In particular, for $A=B$, we get $p(A)=p(A)^2$, i.e.~$p(A)=0$ or $p(A)=1$. 
Every Dirac delta probability measure is of this form. On standard Borel spaces, only Dirac deltas are in this form, and so every deterministic morphism between standard Borel spaces comes from an ordinary measurable function, via the construction \eqref{kernelfrommeasure}. 
If one considers coarser $\sigma$-algebras, however, there are measures which are deterministic (according to \Cref{defdet}), but which are not Dirac deltas. These are extremely important, for example most ergodic measures are of this form, if one considers the $\sigma$-algebra of invariant sets (see \Cref{defergodic}). 
In other words, we have functors
$$
\begin{tikzcd}
 \cat{Meas} \ar{r}{K} & \cat{Stoch}_\det \ar[hookrightarrow]{r} & \cat{Stoch}
\end{tikzcd}
$$
and the first functor is not quite the identity. 
When we speak generally of \emph{deterministic} Markov kernels, we will mean kernels with value zero and one, which are more general than the ones obtained by a measurable function via the functor $K$ of \eqref{kernelfrommeasure}. The advantage of working with this category will become clear in \Cref{ergodic}, and also in \Cref{weakquotient}. 
For more theory on those deterministic morphisms which are not Dirac deltas, see \cite{ours_LICS}.

\begin{proposition}
 The following conditions are equivalent for a Markov category $\cat{C}$:
 \begin{enumerate}
  \item Every morphism of $\cat{C}$ is deterministic.
  \item The copy maps form a natural transformation.
  \item The monoidal structure of $\cat{C}$ is cartesian.
 \end{enumerate}
\end{proposition}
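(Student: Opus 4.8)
The plan is to prove the three conditions equivalent by showing that $(1)\Leftrightarrow(2)$ is essentially a matter of unwinding definitions, and then closing the cycle with $(2)\Rightarrow(3)$ and $(3)\Rightarrow(2)$, where the conceptual content is exactly Fox's theorem (identifying commutative comonoid structures with cartesianness), which I would reprove by hand in the string-diagrammatic setting rather than cite.

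For $(1)\Leftrightarrow(2)$: the copy maps are a candidate natural transformation $\cop\colon \id_{\cat C}\Rightarrow({-}\otimes{-})\circ\Delta$ from the identity functor to the functor $X\mapsto X\otimes X$ (which sends $f$ to $f\otimes f$). The naturality square at a morphism $f\colon X\to Y$ reads $\cop_Y\circ f = (f\otimes f)\circ\cop_X$, which is exactly the determinism condition \eqref{det} for $f$. Hence copy is natural precisely when every morphism is deterministic, and there is nothing further to check. For $(3)\Rightarrow(2)$: in a cartesian monoidal category each object carries a \emph{unique} commutative comonoid structure, whose counit is forced to be the unique map to the terminal unit $I$ and whose comultiplication, by the counit laws, must satisfy $\pi_1\circ d=\pi_2\circ d=\id$ and so equals the diagonal $\langle\id,\id\rangle$; the diagonal is natural by the universal property of the product. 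Since the Markov-category copy maps form such a comonoid structure, they coincide with the diagonal and are therefore natural.

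The substance is $(2)\Rightarrow(3)$, which I expect to be the main obstacle. I would show directly that for all $X,Y$ the object $X\otimes Y$, equipped with the projections
$$
\pi_X \coloneqq (\id_X\otimes\del_Y)\colon X\otimes Y\to X, \qquad \pi_Y \coloneqq (\del_X\otimes\id_Y)\colon X\otimes Y\to Y
$$
(using the unitors $X\otimes I\cong X$ and $I\otimes Y\cong Y$), is the categorical product. For $f\colon A\to X$ and $g\colon A\to Y$ the candidate pairing is $\langle f,g\rangle\coloneqq(f\otimes g)\circ\cop_A$, and the projection laws $\pi_X\circ\langle f,g\rangle=f$, $\pi_Y\circ\langle f,g\rangle=g$ follow from the counit law for $\cop$ together with terminality of $I$ (which gives $\del_Y\circ g=\del_A$). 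The crux is uniqueness. I would first record the purely comonoidal identity $(\pi_X\otimes\pi_Y)\circ\cop_{X\otimes Y}=\id_{X\otimes Y}$, which follows from the compatibility of copy with the tensor and the counit laws (no naturality needed). Then, for any $h\colon A\to X\otimes Y$ with $\pi_X\circ h=f$ and $\pi_Y\circ h=g$, naturality of copy (hypothesis $(2)$) applied to $h$ yields
$$
h = (\pi_X\otimes\pi_Y)\circ\cop_{X\otimes Y}\circ h = (\pi_X\otimes\pi_Y)\circ(h\otimes h)\circ\cop_A = (\pi_X h\otimes\pi_Y h)\circ\cop_A = (f\otimes g)\circ\cop_A = \langle f,g\rangle .
$$
Together with the fact that $I$ is already terminal, this exhibits all finite products and identifies them with the monoidal structure, so $\cat C$ is cartesian.

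The one genuinely delicate point is the bookkeeping with the unitor, associator, and symmetry coherence isomorphisms when expanding $\cop_{X\otimes Y}$ via the tensor-compatibility axiom; doing this in raw component form is error-prone, so I would instead carry out these manipulations graphically with string diagrams, where the cancellations become transparent.
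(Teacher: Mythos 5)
Your proof is correct, but there is no argument in the paper to compare it against: the paper states this proposition without proof, treating it as known background (it is essentially Fox's theorem specialized to Markov categories, and appears in the literature the paper builds on). Your write-up supplies the standard argument in full, and each step checks out. The equivalence $(1)\Leftrightarrow(2)$ is indeed pure definition-unwinding, since the naturality square for $\cop$ at a morphism $f$ is literally the determinism equation \eqref{det}. For $(3)\Rightarrow(2)$, the counit laws do force $\cop_X$ to coincide with the diagonal (both projections composed with $\cop_X$ are identities, so $\cop_X=\langle\id,\id\rangle$ by uniqueness of pairings), and the diagonal is natural by the universal property of the product. For $(2)\Rightarrow(3)$, your two-step structure is exactly right and is where the content lies: the identity $(\pi_X\otimes\pi_Y)\circ\cop_{X\otimes Y}=\id_{X\otimes Y}$ needs only the comonoid axioms and the compatibility of copy with the tensor (no naturality), and then naturality of $\cop$ at a competitor $h$ forces $h=\langle f,g\rangle$, giving uniqueness; existence of the pairing uses terminality of $I$ via $\del_Y\circ g=\del_A$, as you note. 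Combined with the semicartesianness axiom (terminality of $I$), this exhibits the monoidal structure as cartesian. What your treatment buys over the paper's citation-style handling is self-containedness and the reassurance that no hidden subtlety lurks in the coherence bookkeeping (which, as you say, is most safely done with string diagrams); what the paper's choice buys is brevity, since the proposition is background rather than a contribution.
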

Therefore one can view cartesian categories as precisely those Markov categories with only trivial randomness. 

Another concept of probability theory which can be expressed in terms of Markov categories is the concept of \emph{almost-sure equality}, first introduced in~\cite[Definition~5.1]{chojacobs2019strings} and expanded in~\cite[Definition~13.1]{fritz2019synthetic}.

\begin{definition}\label{as-equal}
 In a Markov category, let $p:A\to X$, and let $f,g:X\to Y$. We say that $f$ and $g$ are \emph{$p$-almost surely equal} if
 $$
 \tikzfig{ase}
 $$
\end{definition} 

In particular, for states this reads as follows. 
 $$
 \tikzfig{ase-states}
 $$
In $\cat{Stoch}$ this condition is equivalent to the usual almost-sure equality for the measure $p$. 

If $f$ is a morphism and $R$ is an equationally-defined property that $f$ may or may not satisfy, we say that $f$ \emph{satisfies the $R$ property $p$-almost surely} if and only if the relevant morphisms are equal $p$-almost surely. 
For example, we say that $f$ is \emph{$p$-almost surely deterministic} if and only if equation \eqref{det} holds $p$-almost surely, i.e.~the following condition holds.
$$
\tikzfig{det-as}
$$
See also \cite[Definition~13.11]{fritz2019synthetic}.

We now turn to conditioning. There are a few variations of the idea of conditionals and disintegrations.
We will use the following one. For additional context, see \cite[Section~11]{fritz2019synthetic}.

\begin{definition}
 In a Markov category $\cat{C}$, let $p:I\to X$ be a state, and let $f:X\to Y$ be a morphism. A \emph{disintegration of $p$} via $f$, or a \emph{Bayesian inversion of $f$ with respect to $p$} is a morphism $f^+_p: Y\to X$ such that the following holds.
 $$
  \tikzfig{cond}
 $$ 
\end{definition}

In $\cat{Stoch}$, this definition reads as follows, if we denote the state $f\circ p:I\to Y$ by $q$. Given a probability measure $p$ on $X$ and a Markov kernel (for example, a measurable function) $f:X\to Y$, the Markov kernel $f^+_p: Y\to X$ is such that for all measurable subsets $A\in\Sigma_X$ and $B\in\Sigma_Y$,
$$
\int_A f(B|x) \,p(dx) = \int_B f^+_p(A|y) \, q(dy) .
$$
In $\cat{FinStoch}$, the condition has the even simpler form
$$
p(x) \, f(y|x) = q(y) \, f^+_p(x|y) .
$$
This can be therefore seen as a categorical definition of Bayesian inversion, and if $f$ is deterministic, as a disintegration of $p$ in the sense of disintegration theorems (see for example \cite[Section~10.6]{bogachev}). 

It follows immediately from the definition that any two disintegrations of $p$ via $f$ as above are equal $q$-almost surely, generalizing what happens in ordinary measure theory.

\begin{proposition}\label{id-as}
 Let $f:X\to Y$ be deterministic. Let $p:I\to X$, and suppose that the disintegration $f^+_p:Y\to X$ exists. Then the composite
 $$
 \begin{tikzcd}
  Y \ar{r}{f^+_p} & X \ar{r}{f} & Y
 \end{tikzcd} 
 $$
 is $(f\circ p)$-almost surely equal to the identity. 
\end{proposition}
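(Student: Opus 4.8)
The plan is to unfold the definition of $(f\circ p)$-almost-sure equality and reduce the claim to the defining equation of the disintegration together with the determinism of $f$. Write $q \coloneqq f\circ p$. By \Cref{as-equal} in its state form, showing that $f\circ f^+_p$ is $q$-almost surely equal to $\id_Y$ amounts to establishing the single equation
$$(\id_Y\otimes(f\circ f^+_p))\circ\cop_Y\circ q = \cop_Y\circ q,$$
i.e.\ that copying $q$ and passing one branch through $f\circ f^+_p$ leaves the joint state on $Y\otimes Y$ unchanged. Since the copy map is cocommutative, it is equivalent to prove the mirror-image identity $((f\circ f^+_p)\otimes\id_Y)\circ\cop_Y\circ q = \cop_Y\circ q$, and it is the latter that will fall out of the computation.

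First I would take the defining equation of the disintegration $f^+_p$, which identifies the joint state on $X\otimes Y$ built from $p$ by copying and applying $f$ to one branch with the joint state built from $q$ by copying and applying $f^+_p$ to one branch:
$$(f^+_p\otimes\id_Y)\circ\cop_Y\circ q = (\id_X\otimes f)\circ\cop_X\circ p.$$
I would then post-compose both sides with $f\otimes\id_Y$, that is, apply $f$ to the $X$-valued output wire. The left-hand side becomes $((f\circ f^+_p)\otimes\id_Y)\circ\cop_Y\circ q$, which is exactly the left-hand side of the identity we want.

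The key step is the right-hand side, and this is where determinism enters. Post-composing $(\id_X\otimes f)\circ\cop_X\circ p$ with $f\otimes\id_Y$ yields $(f\otimes f)\circ\cop_X\circ p$, namely two copies of $p$ each pushed through $f$. Since $f$ is deterministic, \Cref{defdet} (equation \eqref{det}) gives $(f\otimes f)\circ\cop_X = \cop_Y\circ f$, so this collapses to $\cop_Y\circ f\circ p = \cop_Y\circ q$. Combining the two computations yields $((f\circ f^+_p)\otimes\id_Y)\circ\cop_Y\circ q = \cop_Y\circ q$, which is the desired a.s.-equality condition.

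I do not anticipate a serious obstacle: the argument is a short diagram chase whose only conceptual input is that deterministic morphisms commute with copying. The one point to handle with care is the bookkeeping of the two output wires — the defining diagram of the disintegration and the a.s.-equality condition of \Cref{as-equal} place the retained copy on opposite sides — but, as noted above, this is harmless because the cocommutativity of the copy map makes the placement of the retained copy immaterial.
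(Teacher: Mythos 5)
Your proof is correct and is essentially the paper's own proof: the paper's argument is a two-step string-diagram chase in which the first equality is the defining equation of the disintegration $f^+_p$ and the second is the determinism of $f$ (i.e.\ $f$ commuting with copy), which are exactly the two steps of your symbolic computation in the same order. Your extra remark about cocommutativity handling which output wire retains the identity is just notational bookkeeping that the graphical calculus absorbs silently, not a different route.
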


This was mentioned in \cite{fritz2019synthetic}, directly after Proposition~11.17 therein.
We include a proof here, for completeness.

\begin{proof}
We have that
 $$
 \tikzfig{id-as}
 $$ 
 where the first equality is by definition of $f^+_p$, and the second equality is by determinism of $f$.
\end{proof}

\begin{definition}
 We say that an object $X$ in a Markov category \emph{has disintegrations} if it admits a disintegration for each state $p:I\to X$ and for each \emph{deterministic} map $f:X\to Y$. 
\end{definition}

In $\cat{Stoch}$, every standard Borel space has disintegrations, this statement is sometimes known as (Rokhlin's) \emph{disintegration theorem}. 
See for example \cite[Theorem~4 and Remark~5]{tao}, \cite[Theorem~5.1.11 and related sections]{viana-oliveira}, as well as \cite[Section~10.6]{bogachev}.

\subsection{Dynamical systems in Markov categories}\label{dynsysmarkov}

A dynamical system in a Markov category can be interpreted as a ``stochastic'' dynamical system in general.
For example, a dynamical system in $\cat{Stoch}$ with monoid $\N$ is a discrete-time Markov process. 
In this work we are mostly interested in dynamical systems in the subcategory of deterministic morphisms of a Markov category. These are interpretable as traditional deterministic dynamical systems. The advantage of working in the larger Markov category (rather than in $\cat{Meas}$) is the convenience of having states (measures) and conditionals (kernels) fit in the same language. 
Moreover, as we have seen, there are deterministic morphisms in $\cat{Stoch}$ which are not just measurable functions, and these are going to be crucial to talk about ergodicity. 

Let $X$ be a dynamical system with monoid $M$ in a Markov category $\cat{C}$. 
Following the intuition of \Cref{dynsyscat}, we have the following.
\begin{itemize}
 \item A left-invariant state (a.k.a.~cone) from the monoidal unit $p:I\to X$ is a state satisfying $m\circ p=p$ for all $m\in M$. This can be interpreted as an ``invariant random state'', or ``invariant measure''. 
 In $\cat{Stoch}$, these are invariant measures. For deterministic dynamical systems generated by measurable functions in the form $m:X\to X$, this is a measure $p$ satisfying
 $$
 p(m^{-1}(A)) = p(A) 
 $$
 for every measurable set $A\in\Sigma_X$. More generally, for dynamical systems generated by kernels, in the form $m:X\times\Sigma_X\to[0,1]$, the invariance of $p$ means that for every measurable $A\in\Sigma_X$,
 $$
 \int_X m(A|x) \,p(dx) = p(A) .
 $$
 
 \item More generally, a left-invariant morphism (a.k.a.~cone) from a generic object $c:C\to X$ is a morphism satisfying $m\circ c=c$ for all $m\in M$. This can be interpreted either as a ``transition to an invariant state'', or as a family of invariant states parametrized (measurably) by $C$. 
 In $\cat{Stoch}$, the interpretation is similar to invariant measures, except that they depend measurably on a parameter. 
 
 \item A \emph{right}-invariant morphism (a.k.a.~\emph{co}cone) $r:X\to R$ is a morphism satisfying $r\circ m=r$ for all $m\in M$. This can be interpreted as an ``invariant function or invariant observable'', especially when $r$ is deterministic. 
 In $\cat{Stoch}$, these indeed correspond to invariant functions, or invariant kernels. For dynamical systems where $M$ acts by measurable functions $m:X\to X$, this means a measurable map $r:X\to R$ satisfying
 $$
 r(m(x)) = r(x) 
 $$
 for every $x\in X$ and $m \in M$. More generally, for dynamical systems where $M$ acts by kernels $m:X\times\Sigma_X\to[0,1]$, the right-invariance of a kernel $r$ means that for every $x\in X$, $m\in M$ and $B\in\Sigma_R$,
 \begin{equation}\label{rinv}
 \int_X r(B|x') \, m(dx'|x) = r(B|x) .
 \end{equation}
\end{itemize}

From now on, when we talk about an \emph{invariant state}, we always mean a \emph{left}-invariant state (in $\cat{Stoch}$, an invariant measure). When we talk about an \emph{invariant observable}, we talk about a \emph{right}-invariant morphism (in $\cat{Stoch}$, an invariant measurable map or kernel). We are mostly interested in \emph{deterministic} invariant observables.

\section{Ergodic decomposition in Markov categories}\label{ergodic}

In order to express our main result (\Cref{mainthm}), we need to express some additional concepts in terms of category theory. 
First of all, we need a notion of ``mixture'' or ``convex combination'', which will be introduced in \Cref{mixtures}.
We then need to explain how to talk about invariant sets categorically, and it will be in terms of a particular colimit construction, explained in \Cref{markovquotient}.
Finally, in \Cref{ergodicstates} we define ergodic states as particular deterministic morphisms, which allows in \Cref{mainsec} to express our main result and its instantiation in the category of Markov kernels (\Cref{mainstoch}).

\subsection{Mixtures of states}\label{mixtures}

Let's define a categorical version of convex decompositions, which can be constructed in any Markov category. 
 
For motivation, let $X$ be a finite set and let $p$ be a (discrete) probability measure on $X$. Let now $k_1,\dots,k_n$ be other probability measures on $X$. We say that $p$ is a convex combination of the $k_i$ with coefficients $q_i$ if 
$$
 p = \sum_{i=1}^n k_i \, q_i ,
$$
or more explicitly, for each $x\in X$,
\begin{equation}\label{convcomb}
 p(x) = \sum_{i=1}^n k_i(x) \, q_i .
\end{equation}

In order for the $q_i$ to be the coefficients of a convex combination, we need $0\le q_i\le 1$ for all $i$, and $\sum_i q_i=1$. 
In other words, we need the map $i\mapsto q_i$ to be a (discrete) probability measure on the set $\{1,\dots,n\}$. 
That way, $i\mapsto k_i$ can be considered a discrete kernel (or transition matrix) from $\{1,\dots,n\}$ to $X$. 
The equation \eqref{convcomb} can then be expressed as the (matrix) composition of $q$ with $k$. 
Note that the values of $k_i$ when $q_i=0$ do not play any role.

More generally, if a generic finite set $Y$ is indexing the convex combination, and writing $k(x|y)$ instead of $k_y(x)$, we see that a convex decomposition of $p$,
$$
 p(x) = \sum_{y\in Y} k(x|y) \, q(y) ,
 $$
is just the decomposition of $p:I\to X$ in the category $\cat{FinStoch}$ into a composite of stochastic matrices, 
$$
\begin{tikzcd}
 I \ar{r}{q} & Y \ar{r}{k} & X .
\end{tikzcd}
$$

Let's now turn to the continuous case. Let $X$ be a measurable space, and let $p$ be a probability measure on $X$, which we can view as a morphism $p:I\to X$ of $\cat{Stoch}$.
In this category, decomposing $p:I\to X$ corresponds to writing it as a (measurably-indexed) mixture of measures. Indeed, $k\circ q = p$ for a pair of morphisms $q:I\to Y, k:Y\to X$ if and only if for each measurable set $A\subseteq X$, we have
 $$
 p(A) = \int_Y k(A|y) \, q(dy) .
 $$
 If we consider $k$ as a $Y$-indexed family of measures on $X$, and denote it by $y\mapsto k_y$, then we are equivalently saying that
 $$
 p = \int_Y k_y \, q(dy) .
 $$
 That is, $p$ is a mixture of measures ($k_y$) with mixing measure $q$. 
Note that the mixture only depends on the values of $k_y$ for $q$-almost all $y$.
 
Here is the general definition. 
\begin{definition}
 Let $p:I\to X$ be a state in a Markov category. 
 A \emph{decomposition} of $p$ is a factorization of $p$.
 More explicitly, it consists of 
 \begin{itemize}
  \item a state $q:I\to Y$,
  \item a morphism $k : Y \to X$
 \end{itemize}
 such that $k\circ q = p$.
\end{definition}
Not all distinctions between decompositions are interesting.
For example, in the discussion above, changes to the values of $k_y$ on any set of $q$-measure zero are not interesting.
In terms of Markov categories, we have the following.
If $(q,k)$ is a decomposition of $p$ and $k' : Y \to X$ is any other morphism with $k \ase q k'$, it follows that $p = k \circ q = k' \circ q$, whence $(q,k')$ is also a decomposition of $p$.
Thus it would be somewhat natural to identify decompositions $(q,k), (q,k')$ whenever $k \ase q k'$, but equivalence of decompositions plays no role in this paper beyond the special case of equivalence with the \emph{trivial} decomposition (\Cref{trivialdecomposition}).

In the discrete case, $p$ always has a trivial decomposition: we can write $p$ as
$$
p = \sum_{y\in Y} p \, q(y) ,
$$
for any normalized measure $q$. More generally, we can always write
$$
p = \int_Y \tilde{p}_y \, q(dy) ,
$$
where $y\mapsto \tilde{p}_y$ is $q$-almost surely equal to $p$. 

We can define this in general. 

\begin{definition}\label{trivialdecomposition}
 Let $(q:I\to Y, k:Y\to X)$ be a decomposition of $p:I\to X$. 
 We say that $(q,k)$ is a \emph{trivial} decomposition of $p$ if and only if $k$ is $q$-almost surely equal to 
 $$
 \begin{tikzcd}
  Y \ar{r}{\del} & I \ar{r}{p} & X . 
 \end{tikzcd}
 $$
 
 We call the state $p$ \emph{indecomposable} if all its decompositions are trivial. 
\end{definition}

Clearly, each Dirac delta probability measure is indecomposable. 
This is part of Choquet theory, where one shows that the set of probability measures over a given space is a simplex (or an infinite-dimensional analogue thereof), and its extreme points are exactly the Dirac delta, or more generally, the zero-one measures. (See for example \cite{winkler} for more on Choquet decompositions.)
For general Markov categories, there is a similar relationship between indecomposable and deterministic states. 

\begin{proposition}
 Every indecomposable state is deterministic. 
\end{proposition}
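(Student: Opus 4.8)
The plan is to exhibit, for any state $p \colon I \to X$, a \emph{canonical} decomposition whose triviality is equivalent to determinism of $p$; indecomposability then forces this decomposition to be trivial, and hence $p$ to be deterministic.

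First I would observe that the pair $(p, \id_X)$ is always a decomposition of $p$: taking $q = p \colon I \to X$ and $k = \id_X \colon X \to X$, we have $k \circ q = \id_X \circ p = p$. Call this the \emph{identity decomposition}. Since $p$ is indecomposable, this decomposition must be trivial, so by \Cref{trivialdecomposition} the morphism $\id_X$ is $p$-almost surely equal to the composite $X \xrightarrow{\del} I \xrightarrow{p} X$; that is, $\id_X \ase{p} p \circ \del$.

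The heart of the argument is to unfold this almost-sure equality and recognise it as the determinism condition \eqref{detstate}. Applying \Cref{as-equal} with $f = \id_X$ and $g = p \circ \del$, the statement $\id_X \ase{p} p \circ \del$ reads $(\id_X \otimes \id_X) \circ \cop \circ p = (\id_X \otimes (p \circ \del)) \circ \cop \circ p$. The left-hand side is just $\cop \circ p$. For the right-hand side I would use the counit axiom $(\id_X \otimes \del) \circ \cop = \id_X$ to simplify $(\id_X \otimes (p \circ \del)) \circ \cop = \id_X \otimes p$, so that the right-hand side becomes $(\id_X \otimes p) \circ p = p \otimes p$. Thus triviality of the identity decomposition is exactly the equation $\cop \circ p = p \otimes p$, which (using that $\cop$ on the unit is the identity) is precisely the statement that the state $p$ is deterministic.

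I do not expect a genuine obstacle here: the real content of the argument is the single observation that the identity decomposition is trivial if and only if $p$ is deterministic, after which indecomposability does all the work. The only care required is the correct bookkeeping of the copy–delete axioms when rewriting the right-hand side, and confirming that the graphical form of almost-sure equality in \Cref{as-equal} does specialise, for the identity decomposition, to the graphical form of determinism in \eqref{detstate}. Both are routine string-diagram manipulations using the counit law.
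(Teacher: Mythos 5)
Your proof is correct and is essentially the paper's own argument: the paper likewise uses the identity decomposition $p = \id_X \circ p$, invokes indecomposability to get $\id_X \ase{p} p \circ \del$, and identifies the resulting equation with the determinism condition \eqref{detstate}. Your explicit unfolding of the almost-sure equality via the counit axiom is just a more detailed rendering of the string-diagram step the paper leaves implicit.
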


\begin{proof}
 Let $p:I\to X$ be an indecomposable state. We can decompose $p$ as $p=\id\circ p$, and by hypothesis this decomposition is trivial. Therefore, the identity $\id$ is $p$-almost surely equal to 
 $$
 \begin{tikzcd}
  X \ar{r}{\del} & I \ar{r}{p} & X . 
 \end{tikzcd}
 $$
 That is, 
 $$
 \tikzfig{pdet}
 $$
 Hence $p$ is deterministic. 
\end{proof}
 
The converse statement fails for general Markov categories, but it holds for a large class of them, including $\cat{Stoch}$ and $\cat{FinStoch}$. 
Recall from \cite[Section~11]{fritz2019synthetic} that a Markov category is called \emph{positive} if whenever a composite $f\circ p$ is deterministic, then the following equation holds. 
\begin{equation}\label{pos}
\tikzfig{pos}
\end{equation}
This holds for $\cat{Stoch}$ and $\cat{FinStoch}$, and it is related to the fact that probabilities are nonnegative (hence the name), see the original reference for more details.

\begin{proposition}
 In a positive Markov category, every deterministic state is indecomposable. 
\end{proposition}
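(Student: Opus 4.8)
The plan is to take an arbitrary decomposition $(q : I \to Y,\ k : Y \to X)$ of the deterministic state $p$, so that $k \circ q = p$, and to show directly that it is trivial in the sense of \Cref{trivialdecomposition}; that is, that $k$ is $q$-almost surely equal to the composite $Y \xrightarrow{\del} I \xrightarrow{p} X$. Unwinding \Cref{as-equal}, this amounts to the single string-diagram identity
$$(\id_Y \otimes k) \circ \cop_Y \circ q \;=\; (\id_Y \otimes (p \circ \del_Y)) \circ \cop_Y \circ q$$
of morphisms $I \to Y \otimes X$. First I would simplify the right-hand side: deleting one leg of the copy and re-marginalising via the counit law $(\id_Y \otimes \del_Y) \circ \cop_Y = \id_Y$ collapses it to the independent product $q \otimes p$. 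So the goal reduces to showing that the ``correlated'' joint state $(\id_Y \otimes k)\circ \cop_Y \circ q$ equals the product of its marginals $q \otimes p$.

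The key step is to invoke positivity. Since by hypothesis $k \circ q = p$ is deterministic (\Cref{defdet}), the positivity axiom \eqref{pos}, applied with the roles $f := q$ and $g := k$, yields
$$(\id_Y \otimes k) \circ \cop_Y \circ q \;=\; (q \otimes (k \circ q)) \circ \cop_I.$$
Because $I$ is the monoidal unit, $\cop_I = \id_I$, and $k \circ q = p$, so the right-hand side is exactly $q \otimes p$. Combining with the previous reduction gives the desired almost-sure equality, hence every decomposition of $p$ is trivial and $p$ is indecomposable.

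The main thing to get right is bookkeeping rather than any deep difficulty: one must correctly transcribe the definitions of trivial decomposition and almost-sure equality into the two-wire string diagrams, and then recognise that the positivity axiom is being instantiated at the unit object, where $\cop_I = \id_I$ turns the conclusion of \eqref{pos} into honest (unconditional) independence $q \otimes p$. I would double-check the orientation of the copy maps and the single application of the counit law, but I do not anticipate any obstruction beyond this diagrammatic matching.
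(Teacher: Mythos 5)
Your proof is correct and takes essentially the same route as the paper's: both apply the positivity axiom \eqref{pos} to an arbitrary decomposition $(q,k)$ of the deterministic state---i.e., instantiate positivity at the unit object with $f \coloneqq q$, $g \coloneqq k$---and read off the resulting identity as precisely the $q$-almost-sure equality $k \ase{q} p \circ \del_Y$ that defines triviality. The only difference is presentational: you make explicit the counit and unitor bookkeeping ($\cop_I = \id_I$, collapsing the right-hand side to $q \otimes p$) that the paper compresses into the statement of \eqref{pos-states}.
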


\begin{proof}
 If we instantiate the positivity condition \eqref{pos} with the case of a state we get that for $p:I\to X$ and $f:X\to Y$ such that $f\circ p$ is deterministic, then
 \begin{equation}\label{pos-states}
 \tikzfig{pos-states}
 \end{equation}
 So let the composition $f\circ p$ be a deterministic state. The equation above says that $f$ is $p$-almost surely equal to $f\circ p$. Therefore $f\circ p$ is indecomposable.
\end{proof}

In conclusion,
\begin{itemize}
 \item In $\cat{Stoch}$ and $\cat{FinStoch}$, convex combinations of measures can be described categorically as compositions of arrows (up to almost sure equality);
 \item The deterministic states are precisely those that cannot be written as a nontrivial convex combination.
\end{itemize}
This is the notion of decomposition which we will use in our ergodic decomposition theorem (\Cref{mainsec}).

\subsection{Markov quotients}\label{markovquotient}

For the purposes of this work, we need \emph{colimits} of dynamical systems in a Markov category, which as we have seen in \Cref{dynsyscat}, can be often interpreted as ``quotient spaces'' or ``spaces of orbits'' (more on this in \Cref{weakquotient}). 
In Markov categories we need to require a little bit more of the usual universal property: it needs to play well with deterministic morphisms.

\begin{definition}
 Let $X$ be a dynamical system with monoid $M$ in a Markov category $\cat{C}$. 
 The \emph{Markov colimit} or \emph{Markov quotient} of $X$ over the action of $M$ is an object, which we denote by $X_\inv$ together with a deterministic map $r:X\to X_\inv$, which is a colimit both in $\cat{C}$ and in the subcategory $\cat{C}_\det$ of deterministic morphisms.
\end{definition}

Alternatively and more explicitly, it is a map $r : X \to X_\inv$ where
\begin{itemize}
\item for every invariant observable $s:X\to S$ there exists a unique map $\tilde s:X_\inv\to S$ such that $s=\tilde s\circ r$, i.e.~making the following diagram commute for each $m\in M$;
  \begin{equation}\label{defXinv}
    \begin{tikzcd}[row sep=small]
      X \ar{dd}[swap]{m} \ar{dr}[swap]{r} \ar[bend left=10]{drr}{s} \\
      & X_\inv \ar[dashed]{r}[near start]{\tilde s} & S \\
      X \ar{ur}{r} \ar[bend right=10]{urr}[swap]{s}
    \end{tikzcd}
  \end{equation} 
\item and moreover the map $\tilde s$ is deterministic if and only if the map $s$ is.
\end{itemize}
By taking $S = X_\inv$ and $s = r$, so that $\tilde s = \id$, we see that the second point implies that $r$ is deterministic.

It is worth remarking on the connection between our Markov quotients and the \emph{Kolmogorov products} introduced in \cite{fritzrischel2019zeroone}.
The latter essentially suggests a notion of cofiltered limit appropriate to Markov categories, characterized by requiring the limiting property to hold in both the $\cat{C}$ and $\cat{C}_\det$ and to be preserved by all functors $(-) \otimes A$ for $A \in \cat{C}$.
By contrast, we do not require Markov quotients to be preserved by the tensor product here.

We have seen that the colimit of a dynamical system has the interpretation of a ``space of orbits''. 
In $\cat{Stoch}$, at least when the dynamics is deterministic, the Markov colimit exists and is given by the \emph{invariant $\sigma$-algebra}. 
We'll define this more precisely and then check the universal property (\Cref{invsigmacolimit}). In \Cref{weakquotient} we will see in what sense it is similar to a space of orbits. 

\begin{definition}\label{definvset}
 Let $X$ be a dynamical system in $\cat{Stoch}$ with monoid $M$. A measurable set $A\in\Sigma_X$ is called \emph{invariant} if for every $m\in M$ we have 
 \begin{equation}\label{invset}
 m(A|x) = 1_A(x) = 
 \begin{cases}
  1 & x\in A ; \\
  0 & x\notin A .
 \end{cases}
 \end{equation}
\end{definition}

If the dynamical system is deterministic and generated by measurable functions $m:X\to X$, the invariance condition \eqref{invset} can be written more simply as $m^{-1}(A)=A$. 
As it is well known, both for the deterministic and for the generic case, invariant sets form a $\sigma$-algebra, often called the \emph{invariant $\sigma$-algebra}.

\begin{proposition}\label{invsigmacolimit}
 Let $X$ be a \emph{deterministic} dynamical system in $\cat{Stoch}$ with monoid $M$. 
 Then the Markov quotient $X_\inv$ exists, and it is given by the same set $X$, equipped with the invariant $\sigma$-algebra. 
\end{proposition}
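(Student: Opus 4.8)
The plan is to verify the Markov quotient universal property directly, using the explicit description of the candidate object $X_\inv$ as the set $X$ equipped with the invariant $\sigma$-algebra $\mathcal I \subseteq \Sigma_X$, together with the deterministic kernel $r : X \to X_\inv$ that comes from the identity-on-points function (so $r(A|x) = 1_A(x)$ for $A \in \mathcal I$). First I would check that $r$ is indeed an invariant observable, i.e.\ that $r \circ m = r$ for all $m \in M$. By \Cref{rinv} this amounts to showing $\int_X r(B|x')\,m(dx'|x) = r(B|x)$ for every invariant $B \in \mathcal I$, which is exactly the invariance condition \eqref{invset} defining $\mathcal I$. I would also note that $r$ is deterministic because, on the invariant $\sigma$-algebra, it factors through a measurable function and hence takes only values $0$ and $1$.

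Next I would establish the factorization. Given any invariant observable $s : X \to S$ in $\cat{Stoch}$, I need a unique $\tilde s : X_\inv \to S$ with $s = \tilde s \circ r$. Since $r$ and $X_\inv$ share the same underlying set as $X$ (only the $\sigma$-algebra is coarsened), the candidate $\tilde s$ is forced to have the same ``function of points'' as $s$; the content is to show that the map $x \mapsto s(\,\cdot\,|x)$, which a priori is only $\Sigma_X$-measurable, is in fact $\mathcal I$-measurable. This is the standard fact that an invariant (right-invariant) observable is measurable with respect to the invariant $\sigma$-algebra: right-invariance $s \circ m = s$ forces each function $x \mapsto s(B|x)$ to be invariant, and invariant real-valued measurable functions are precisely the $\mathcal I$-measurable ones. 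Granting this, $\tilde s(B|{-})$ is well-defined as a kernel $X_\inv \to S$, and $\tilde s \circ r = s$ holds because composing with $r$ does not change the underlying pointwise assignment. Uniqueness follows since any two such factorizations must agree as kernels out of $X_\inv$, whose points are the same as those of $X$.

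Finally I would verify the determinism clause: $\tilde s$ is deterministic if and only if $s$ is. One direction is immediate, since $s = \tilde s \circ r$ with $r$ deterministic, and composites of deterministic morphisms are deterministic. For the converse, if $s = \tilde s \circ r$ is deterministic, then because $r$ is (essentially) surjective on underlying points the determinism of the composite forces determinism of $\tilde s$; concretely, the value $\tilde s(\,\cdot\,|y)$ at a point $y \in X_\inv$ equals $s(\,\cdot\,|y)$, which is a zero-one measure. This gives the refined universal property characterizing the Markov quotient, proving that $X_\inv$ with $r$ is a colimit both in $\cat{Stoch}$ and in $\cat{Stoch}_\det$.

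I expect the \textbf{main obstacle} to be the measurability argument in the middle step: carefully proving that every right-invariant Markov kernel $s : X \to S$ has each section $x \mapsto s(B|x)$ measurable with respect to the invariant $\sigma$-algebra $\mathcal I$, rather than merely with respect to $\Sigma_X$. For the case where the dynamics is generated by measurable functions this reduces to the classical characterization of $\mathcal I$-measurable functions as exactly the invariant measurable functions; in the more general kernel-generated case one must handle the integral form \eqref{rinv} of invariance and argue that invariance of $x \mapsto s(B|x)$ as a bounded measurable function still forces $\mathcal I$-measurability, which is the delicate measure-theoretic heart of the proof.
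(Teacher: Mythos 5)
Your proposal follows the same overall route as the paper's proof: the same candidate object, the same kernel $r(A|x)=1_A(x)$ induced by the identity on points, the same forced definition of $\tilde s$ by keeping the pointwise values of $s$, and uniqueness and the determinism clause handled essentially as the paper does. For dynamics generated by measurable \emph{functions} $m:X\to X$ your middle step is also fine, by the classical fact that the invariant measurable real functions are exactly the ones measurable for the invariant $\sigma$-algebra. The genuine gap is that the proposition is stated for \emph{deterministic dynamical systems in $\cat{Stoch}$}, i.e.\ $M$ acting by zero-one \emph{kernels} that need not come from any measurable function (a generality the paper insists on throughout), and for that case you explicitly leave the key claim unproven, calling it the ``delicate measure-theoretic heart''. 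Worse, your phrasing ``invariance of $x\mapsto s(B|x)$ as a bounded measurable function'' obscures what is at stake: for kernel dynamics the condition \eqref{rinv} is not invariance under composition with a map (that does not typecheck) but a \emph{harmonicity} condition $\int_X s(B|x')\,m(dx'|x)=s(B|x)$, and harmonicity alone does \emph{not} force measurability with respect to the invariant $\sigma$-algebra of \Cref{definvset}: for a non-deterministic kernel (e.g.\ a nearest-neighbour random walk on a regular tree) the invariant $\sigma$-algebra can be trivial while nonconstant bounded harmonic functions exist. So determinism of $m$ must enter the argument, and your proposal never uses it at this point.

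Here is the missing step, which is exactly how the paper closes the gap. Fix $B\in\Sigma_S$ and $c\in[0,1)$, and let $U=\{x\in X : s(B|x)>c\}$, a $\Sigma_X$-measurable set. By \eqref{rinv},
\[
s(B|x) \;=\; \int_U s(B|x')\,m(dx'|x) \;+\; \int_{X\setminus U} s(B|x')\,m(dx'|x).
\]
Since $m$ is deterministic, $m(U|x)\in\{0,1\}$. If $m(U|x)=1$, the right-hand side is an integral of a function that is $>c$ on a set of full measure, hence $s(B|x)>c$, i.e.\ $x\in U$; if $m(U|x)=0$, the right-hand side is at most $c$, hence $x\notin U$. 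Therefore $m(U|x)=1_U(x)$ for all $x$, i.e.\ $U$ is invariant in the sense of \Cref{definvset}, and so $x\mapsto s(B|x)$ is measurable for the invariant $\sigma$-algebra. With this argument supplied, the rest of your proposal (the cocone property of $r$, uniqueness of $\tilde s$, and the determinism clause) goes through as you describe.
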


\begin{proof}
 Construct the kernel $r:X\to X_\inv$ as follows,
 $$
 r(A|x) = 1_A(x) = \begin{cases} 
                                   1 & x\in A \\
                                   0 & x\notin A
                                  \end{cases}
 $$
 for every $x\in S$ and every measurable (invariant) set $A\in \Sigma_{X_\inv}$.
 Note that it is exactly the kernel induced by the function $X\to X_\inv$ induced by the set-theoretic identity. As every measurable set of $X_\inv$ is measurable in $X$, this function is measurable, and so it induces a well-defined Markov kernel.
 Let's now prove that $r$ is left-invariant. For every $m\in M$, every $x\in X$ and every measurable (invariant) set $A\in \Sigma_{X_\inv}$, 
 $$
  \int_X r(A|x') \,m(dx'|x) = \int_X 1_A(x') \,m(dx'|x) = m(A|x) = 1_A(x) = r(A|x),
 $$
 where we used invariance of $A$. 
 
 Let's now prove the universal property \eqref{defXinv}.
 Let $s:X\to S$ be a right-invariant Markov kernel. 
 Define the kernel $\tilde s:X_\inv\to S$ simply by 
 $$
 \tilde s(B|x) \coloneqq s(B|x) 
 $$
 for all $x\in X_\inv$ (equivalently, $x\in X$) and all measurable $B\subseteq S$. 
 To see that $\tilde s$ is measurable in $x$, consider a Borel-generating interval $(r,1]\subseteq [0,1]$ for some $0\le r < 1$. We have to prove that the set 
 \begin{align*}
 \tilde s^*(B,r) &\coloneqq   \{x\in X_\inv : s(B|x) > r\}
 \end{align*}
 is measurable in $X_\inv$, i.e., as a subset of $X$, is measurable and invariant.
 We know that it is measurable as a subset of $X$, since $s$ is a Markov kernel. Let's prove invariance.
 Using the fact that $s$ is right-invariant (equation~\eqref{rinv}), and that $\tilde s^*(B,r)$ is measurable as a subset of $X$,
 \begin{align*}
 s(B|x) &= \int_X s(B|x') \, m(dx'|x) \\
 &= \int_{\tilde s^*(B,r)} s(B|x') \, m(dx'|x) + \int_{X\setminus \tilde s^*(B,r)} s(B|x') \, m(dx'|x)
 \end{align*}
 
 Now let's use that $m$ is deterministic, so that either we have $m(\tilde s^*(B,r)|x)=1$, or $m(X\setminus \tilde s^*(B,r)|x)=1$. 
 In the first case, we have that $s(B|x') > r$ on a set of measure $1$, therefore $s(B|x)> r$, i.e.~$x\in \tilde s^*(B,r)$. 
 In the latter case, $s(B|x') \le r$ on a set of measure $1$, and therefore $s(B|x) \le r$, i.e.~$x\notin \tilde s^*(B,r)$. 
 Since $m$ is deterministic, these are the only possibilities, and so we have that 
 $$
 m(\tilde s^*(B,r)|x) = \begin{cases}
                       1 & x \in \tilde s^*(B,r) \\
                       0 & x\notin \tilde s^*(B,r) .
                      \end{cases}
 $$ 
 This means precisely that $\tilde s^*(B,r)$ is invariant, and so $\tilde s$ is measurable. 
 Therefore, $\tilde s$ is a well-defined Markov kernel $X_\inv\to S$.
  
 For uniqueness, note that $\tilde s$ is the only possible choice of kernel $X_\inv\to S$ making \eqref{defXinv} commute: let $k:X_\inv\to S$ be another such kernel. Then for all $x\in X$ and every measurable $B\subseteq X$,
 $$
 k(B|x) = \int_{X_\inv} k(B|x') \,\delta(dx'|x) = \int_{X_\inv} k(B|x') \, r(dx'|x) =  s(B|x).
 $$
 
 Moreover, by construction, $\tilde s$ is deterministic if and only if $s$ is.
\end{proof}

\subsection{Ergodic states}\label{ergodicstates}

\begin{definition}\label{defergodic}
 Let $X$ be a dynamical system with monoid $M$ in a Markov category $\cat{C}$. 
 An invariant state $p:I\to X$ is \emph{ergodic} if for every invariant deterministic observable $c:X\to R$, the composition $c\circ p$ is deterministic. 
\end{definition}

Intuitively, an ergodic measure is an invariant state of the system such that every conserved quantity almost surely takes on a single definite value.
In particular, invariant deterministic observations $c : X \to R$ cannot be used to `decompose' the state $p$ into disjoint invariant subsystems of $X$, since $c \circ p$ being deterministic intuitively means that the state $p$ is concentrated in a single fibre of $c$.

\begin{example}
 Let $X$ be a standard Borel space, and consider the infinite product $X^\N$ with the product $\sigma$-algebra. 
 The group $S_\N$ of finite, unbounded permutations of $\N$ acts on $X^\N$ by permuting the components.
 Let now $p$ be a measure on $X$. The infinite product measure $p^{\otimes\N}$ on $X^\N$ is clearly permutation-invariant.
 The \emph{Hewitt-Savage zero-one law}~\cite{hewitt-savage} says that for every permutation-invariant deterministic observable $c:X^\N\to R$, the pushforward of $p^{\otimes\N}$ along $c$ is a zero-one measure. 
 As a zero-one measure is exactly a deterministic state according to our definition, the Hewitt-Savage zero-one law can be equivalently expressed as follows: for the dynamical system $X^\N$ with the dynamics given by $S_\N$, any infinite product measure is ergodic. 
 
 A categorical proof of the Hewitt-Savage zero-one law, in terms of Markov categories, has been given in \cite{fritzrischel2019zeroone}. 
\end{example}

Our notion of ergodicity coincides with the traditional one in terms of invariant sets, by means of the following statement, which follows directly from the universal property of Markov colimits.

\begin{proposition}\label{erguni}
 Let $X$ be a dynamical system with monoid $M$ in a Markov category $\cat{C}$, and suppose that the Markov colimit $X_\inv$ of $X$ exists.
 An invariant state $p:I\to X$ is ergodic if and only if the composition with the universal cocone
 $$
 \begin{tikzcd}
  I \ar{r}{p} & X \ar{r}{r} & X_\inv
 \end{tikzcd}
 $$
 is deterministic. 
\end{proposition}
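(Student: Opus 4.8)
The plan is to prove each implication directly from the universal property of the Markov colimit together with the basic fact that deterministic morphisms are closed under composition (so that $\cat{C}_\det$ is a subcategory, as recorded in \Cref{defdet}).

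For the forward direction, the key observation is that the universal cocone $r:X\to X_\inv$ is itself an invariant deterministic observable. It is right-invariant because it is a cocone under the dynamical system, so $r\circ m = r$ for all $m\in M$; and it is deterministic by the very definition of a Markov colimit (as noted in the remark following \eqref{defXinv}, taking $S=X_\inv$ and $s=r$ forces $r$ to be deterministic). Hence I would simply apply the definition of ergodicity (\Cref{defergodic}) with the choice $c = r$ and $R = X_\inv$, which immediately yields that $r\circ p$ is deterministic.

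For the converse, suppose $r\circ p$ is deterministic and let $c:X\to R$ be an arbitrary invariant deterministic observable. By the universal property \eqref{defXinv}, there is a unique $\tilde c:X_\inv\to R$ with $c = \tilde c\circ r$, and moreover $\tilde c$ is deterministic precisely because $c$ is. Then $c\circ p = \tilde c\circ(r\circ p)$ exhibits $c\circ p$ as a composite of two deterministic morphisms, so it is deterministic; as $c$ was arbitrary, $p$ is ergodic.

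I do not expect a genuine obstacle here: the entire content of the statement is packaged into the universal property of $X_\inv$, and in particular into the clause that the induced factorization $\tilde s$ is deterministic exactly when $s$ is. This ``$\cat{C}_\det$-colimit'' refinement of an ordinary colimit is what makes the Markov quotient the right object for detecting ergodicity, and it is exactly the property that powers the converse direction. The only routine point to keep in mind is the closure of determinism under composition, which holds since $\cat{C}_\det$ is a subcategory.
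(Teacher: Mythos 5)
Your proof is correct and follows essentially the same route as the paper's: one direction instantiates \Cref{defergodic} with $c = r$ (using that $r$ is invariant and deterministic), and the other uses the universal property \eqref{defXinv} to factor $c = \tilde c \circ r$ with $\tilde c$ deterministic, so that $c \circ p = \tilde c \circ (r \circ p)$ is a composite of deterministic morphisms. The only cosmetic difference is which implication you treat first.
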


\begin{proof}
  First, suppose that the composite $r\circ p$ is deterministic. Let $c:X\to R$ be an invariant deterministic observable. By definition of Markov colimit, $c$ factors (uniquely) as a composite $\tilde{c}\circ r$, where $\tilde{c}:X_\inv\to R$ is deterministic. Therefore 
 \[
  c\circ p = \tilde{c}\circ r \circ p =  \tilde{c}\circ (r\circ p)
 \]
 is a composite of deterministic maps, and hence is deterministic. This is true for every invariant deterministic $c$, and so $p$ is ergodic.
 
 The converse follows by taking $c$ in the definition of ergodicity to be $r:X\to X_\inv$, which is deterministic and invariant. 
\end{proof}

\begin{corollary}\label{cor-erg-in-stoch}
 Let $X$ be a deterministic dynamical system in $\cat{Stoch}$. An invariant measure $p$ on $X$ is ergodic (according to our \Cref{defergodic}) if and only if for every invariant measurable set $A$, we have $p(A)=0$ or $p(A)=1$ (i.e.~$p$ is ergodic in the traditional sense).
\end{corollary}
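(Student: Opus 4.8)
The plan is to deduce this corollary from the abstract machinery already set up, rather than argue about invariant sets directly. The crucial point is that for a \emph{deterministic} system in $\cat{Stoch}$ everything is in place: by \Cref{invsigmacolimit} the Markov colimit $X_\inv$ exists and is carried by the same underlying set $X$ equipped with the invariant $\sigma$-algebra, and the universal cocone $r:X\to X_\inv$ is the kernel $r(A|x)=1_A(x)$ induced by the set-theoretic identity. In particular, the hypothesis of \Cref{erguni} (existence of $X_\inv$) is satisfied, so I can appeal to that proposition freely.

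First I would invoke \Cref{erguni}, which says that $p$ is ergodic if and only if the composite $r\circ p:I\to X_\inv$ is deterministic. This replaces the quantification over all invariant deterministic observables in \Cref{defergodic} by a statement about the single state $r\circ p$. Then I would compute $r\circ p$ explicitly using the Chapman--Kolmogorov formula for composition in $\cat{Stoch}$ together with $r(A|x)=1_A(x)$: for every invariant measurable set $A$ one obtains $(r\circ p)(A)=\int_X 1_A(x)\,p(dx)=p(A)$. Thus $r\circ p$ is nothing but the measure $p$ restricted to the invariant $\sigma$-algebra $\Sigma_{X_\inv}$.

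Finally I would apply the characterization of deterministic states in $\cat{Stoch}$ discussed after \Cref{defdet}: a state is deterministic precisely when it takes only the values $0$ and $1$ on the measurable sets of its domain. Applied to $r\circ p$ viewed as a state on $X_\inv$, this gives that $r\circ p$ is deterministic if and only if $(r\circ p)(A)=p(A)\in\{0,1\}$ for every invariant measurable set $A$. Chaining the three equivalences yields exactly the claim, identifying our notion of ergodicity with the classical one.

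I do not expect a serious obstacle, as the corollary is essentially bookkeeping assembled from \Cref{erguni}, \Cref{invsigmacolimit}, and the zero-one description of deterministic states. The only point that requires care is the identification of $r\circ p$ with the restriction of $p$ to $\Sigma_{X_\inv}$: one must be precise that the determinism condition is tested against exactly the invariant sets, i.e.\ the measurable sets of $X_\inv$, and \emph{not} against all measurable sets of $X$. This is what keeps the statement nontrivial and distinguishes ergodicity from the much stronger condition that $p$ be a Dirac measure on the full space $X$.
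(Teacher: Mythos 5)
Your proposal is correct and takes essentially the same route the paper intends: the corollary is presented there as a direct consequence of \Cref{erguni} combined with \Cref{invsigmacolimit} and the zero-one characterization of deterministic states in $\cat{Stoch}$ given after \Cref{defdet}. Your explicit Chapman--Kolmogorov computation identifying $r\circ p$ with the restriction of $p$ to the invariant $\sigma$-algebra, tested only against invariant sets, is exactly the bookkeeping the paper leaves implicit.
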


Compare for example with the traditional characterizations \cite[Theorem~3]{tao} and \cite[Proposition~4.1.3]{viana-oliveira}.

In order to state \Cref{mainthm}, it remains to consider \emph{families} of states which are almost surely ergodic.
In \Cref{defergodic} we required $p : I \to X$ to be an invariant state, but the definition still makes sense for an invariant morphism $k : Y \to X$ (i.e.\ with domain other than $I$).
The development above generalizes immediately and in particular, in $\cat{Stoch}$, $k : Y \to X$ being ergodic simply says that each measure $k_y$ on $X$ for $y \in Y$ is ergodic.
However, we cannot simply derive the meaning of `almost surely ergodic' in the manner described after \Cref{as-equal}, since ergodicity is not a purely equational notion.
Nevertheless it is natural to adopt the following definition.

\begin{definition}\label{def-as-erg}
  Let $X$ be a dynamical system with monoid $M$ in a Markov category $\cat{C}$.
  Let $Y$ be an object of $\cat{C}$ with a state $q : I \to Y$ and a morphism $k : Y \to X$.
  We say that $k$ is \emph{$q$-almost surely ergodic} if
  \begin{itemize}
  \item $k$ is $q$-almost surely invariant, and
  \item whenever $r : X \to R$ is invariant and deterministic (not just almost surely), then $r \circ k$ is $q$-almost surely deterministic.
  \end{itemize}
\end{definition}

The following is a straightforward adaptation of \Cref{erguni}.

\begin{proposition}\label{ergasuni}
  Let $X$ be a dynamical system with monoid $M$ in a Markov category $\cat{C}$, and suppose that the Markov colimit $X_\inv$ of $X$ exists.
  Let $Y$ be an object of $\cat{C}$ with a state $q : I \to Y$ and a $q$-almost surely invariant morphism $k : Y \to X$.
  Then $k$ is $q$-almost surely ergodic if and only if the composition with the universal cocone
  $$
  \begin{tikzcd}
    Y \ar{r}{k} & X \ar{r}{r} & X_\inv
  \end{tikzcd}
  $$
  is $q$-almost surely deterministic.
\end{proposition}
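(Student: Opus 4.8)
The plan is to follow the proof of \Cref{erguni} essentially verbatim, upgrading each genuine equality to its $q$-almost-sure counterpart and carrying the general domain $Y$ (rather than $I$) throughout. As in that proof, one direction is a one-line application of the definition, while the reverse direction uses the factorization through the Markov colimit together with a single auxiliary fact about almost-sure determinism.

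For the forward implication, I would assume $k$ is $q$-almost surely ergodic and simply note that the universal cocone $r : X \to X_\inv$ is itself invariant and deterministic (genuinely, not merely almost surely). Hence the second clause of \Cref{def-as-erg}, applied to $r$ in place of the generic observable, gives at once that $r \circ k$ is $q$-almost surely deterministic.

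For the reverse implication, I would assume $r \circ k$ is $q$-almost surely deterministic. Since $k$ is $q$-almost surely invariant by hypothesis, it suffices to verify the second clause of \Cref{def-as-erg}: given any invariant deterministic observable $c : X \to R$ (I rename the generic observable to $c$ to avoid a clash with the cocone $r$), I must show that $c \circ k$ is $q$-almost surely deterministic. First I would factor $c = \tilde c \circ r$ using the universal property \eqref{defXinv} of the Markov colimit, and invoke its ``deterministic if and only if'' clause to conclude that $\tilde c : X_\inv \to R$ is deterministic because $c$ is. Then $c \circ k = \tilde c \circ (r \circ k)$ displays $c \circ k$ as a genuinely deterministic map $\tilde c$ post-composed with the $q$-almost surely deterministic map $r \circ k$.

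The only point where the argument departs from \Cref{erguni} is the lemma that a deterministic map post-composed with a $q$-almost surely deterministic map is again $q$-almost surely deterministic; this is the step I expect to require the most care, though it remains routine. Concretely, for $h : Y \to X_\inv$ that is $q$-almost surely deterministic and $g : X_\inv \to R$ deterministic, I would start from determinism of $g$ to rewrite $\cop_R \circ g \circ h = (g \otimes g) \circ \cop_{X_\inv} \circ h$, and then apply the $q$-almost-sure determinism of $h$, namely $\cop_{X_\inv} \circ h \ase{q} (h \otimes h) \circ \cop_Y$, post-composed by $g \otimes g$. That post-composition preserves $q$-almost-sure equality is immediate from \Cref{as-equal}, since applying $\id_Y \otimes (g \otimes g)$ to both sides of the defining equation of almost-sure equality yields exactly the post-composed equation. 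Reassembling the two sides gives $\cop_R \circ (g \circ h) \ase{q} \big((g \circ h) \otimes (g \circ h)\big) \circ \cop_Y$, which is precisely $q$-almost-sure determinism of $g \circ h = \tilde c \circ (r \circ k)$, completing the reverse direction.
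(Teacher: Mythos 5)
Your proof is correct and is precisely the ``straightforward adaptation of \Cref{erguni}'' that the paper leaves implicit: both directions mirror the paper's argument for \Cref{erguni}, with genuine equalities upgraded to $q$-almost-sure ones. The auxiliary lemma you isolate---that post-composing a $q$-almost surely deterministic morphism with a genuinely deterministic one yields a $q$-almost surely deterministic composite---is exactly the extra ingredient this adaptation needs, and your verification of it (using that post-composition preserves $\ase{q}$, applied to $\tilde c \otimes \tilde c$ after rewriting with determinism of $\tilde c$) is sound.
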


\Cref{def-as-erg} is justified by what it means for $\cat{Stoch}$.

\begin{corollary}
  Let $X$ be a deterministic dynamical system in $\cat{Stoch}$ and let $q : I \to Y$ be a measure on some measurable space $Y$.
  A stochastic map $k : Y \to X$ on $X$ is $q$-almost surely ergodic (according to our \Cref{def-as-erg}) if and only if there is a measurable set $E \subseteq Y$ with $q(E) = 1$ such that $k_y$ is ergodic (in the usual sense) for each $y \in E$.
\end{corollary}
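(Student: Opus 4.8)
The plan is to reduce everything to the universal cocone $r : X \to X_\inv$ and then read off the resulting almost-sure conditions pointwise in $\cat{Stoch}$. Since $X$ is a deterministic dynamical system, \Cref{invsigmacolimit} guarantees that the Markov colimit $X_\inv$ exists: it is the same underlying set carrying the invariant $\sigma$-algebra, and $r$ is the kernel induced by the identity function. I would begin by invoking \Cref{ergasuni}, whose invariance hypothesis holds on both sides of the equivalence (on the left because $q$-almost sure ergodicity includes $q$-almost sure invariance by \Cref{def-as-erg}, and on the right because ergodicity of each $k_y$ includes invariance). It tells us that $k$ is $q$-almost surely ergodic precisely when the composite $r \circ k : Y \to X_\inv$ is $q$-almost surely deterministic. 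Thus the corollary reduces to translating the two clauses of \Cref{def-as-erg} --- ``$k$ is $q$-almost surely invariant'' and ``$r \circ k$ is $q$-almost surely deterministic'' --- into pointwise statements about the family of measures $(k_y)_{y \in Y}$.

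Next I would spell out each translation in $\cat{Stoch}$. For the first clause, $q$-almost sure invariance unwinds (as explained after \Cref{as-equal}) to the existence of a measurable $q$-null set $N_1$ such that $k_y$ is an invariant measure for every $y \notin N_1$. For the second clause I would first compute the fibre $(r \circ k)_y = r \circ k_y$, which is the pushforward of $k_y$ along $r$, i.e.\ the restriction of $k_y$ to the invariant $\sigma$-algebra (since $r(A \mid x) = 1_A(x)$ gives $r \circ k_y(A) = k_y(A)$ for invariant $A$). Almost-sure determinism of a morphism in $\cat{Stoch}$ means precisely that its fibre is a deterministic (zero-one) state for $q$-almost all indices; concretely, $\cop \circ (r\circ k)_y = (r\circ k)_y \otimes (r\circ k)_y$ is exactly condition \eqref{detstate} for the state $r \circ k_y$. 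Hence there is a measurable $q$-null set $N_2$ outside of which $k_y(A) \in \{0,1\}$ for every invariant set $A$.

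Setting $E = Y \setminus (N_1 \cup N_2)$ then gives a measurable set with $q(E) = 1$ on which $k_y$ is simultaneously invariant and zero-one on the invariant $\sigma$-algebra; by \Cref{cor-erg-in-stoch} this is exactly ergodicity of $k_y$ in the usual sense, which settles one direction. The converse is the same chain read backwards: given $E$ as in the statement, each $k_y$ for $y \in E$ is invariant and zero-one on invariant sets, so $k$ is $q$-almost surely invariant and $r \circ k$ is $q$-almost surely deterministic, whence \Cref{ergasuni} returns $q$-almost sure ergodicity of $k$.

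The main obstacle I anticipate is measure-theoretic bookkeeping rather than anything conceptual: one must justify that the diagrammatic almost-sure conditions genuinely pass to honest measurable full-measure sets, i.e.\ that the sets of ``good'' $y$ (where $k_y$ is invariant, respectively zero-one on invariant sets) are co-null for a \emph{measurable} null set, so that the resulting $E$ is measurable. This is exactly the content of the equivalence between the categorical \Cref{as-equal} and ordinary almost-sure equality in $\cat{Stoch}$, together with the identification of almost-sure determinism with pointwise determinism of the fibres; once that equivalence is invoked, the remaining steps are immediate.
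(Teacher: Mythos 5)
Your proof is correct and follows essentially the same route as the paper's: both reduce $q$-almost sure ergodicity to $q$-almost sure determinism of $r \circ k$ via \Cref{ergasuni} (with \Cref{invsigmacolimit} supplying the Markov quotient and \Cref{cor-erg-in-stoch} giving the pointwise translation), and both produce $E$ by intersecting the full-measure sets coming from the invariance clause and the determinism clause separately. The measure-theoretic bookkeeping you flag at the end --- extracting a single measurable full-measure set from the diagrammatic almost-sure conditions --- is treated at exactly the same level of brevity in the paper's own proof, so your argument is on par with it.
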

\begin{proof}
  By \Cref{invsigmacolimit} the Markov quotient $r : X \to X_\inv$ exists.
  Suppose $k$ satisfies the second condition, which in particular certainly means that $k$ is $q$-almost surely invariant.
  Using \Cref{cor-erg-in-stoch} we see that $r \circ k$ is $q$-almost surely deterministic and hence, by \Cref{ergasuni}, $k$ is $q$-almost surely ergodic.
  For the converse, we can take a set $E_1 \subseteq Y$ with $q(E_1) = 1$ and $p_y$ invariant for $y \in E_1$, and a set $E_2 \subseteq Y$ with $q(E_2) = 1$ and $(r \circ k)_y$ valued in $\{0,1\}$.
  Now $E = E_1 \cap E_2$ is as required.
\end{proof}

\subsection{Main statement}\label{mainsec}

\begin{theorem}[synthetic ergodic decomposition theorem]\label{mainthm}
 Let $\cat{C}$ be a Markov category. 
 Let $X$ be a deterministic dynamical system in $\cat{C}$ with monoid $M$. Suppose that 
 \begin{itemize}
  \item The underlying object $X$ of $\cat{C}$ has disintegrations;
  \item The Markov colimit $X_\inv$ of the dynamical system exists.
 \end{itemize}

 Then every invariant state of $X$ can be written as a composition $k\circ q$ such that $k$ is $q$-almost surely ergodic. 
\end{theorem}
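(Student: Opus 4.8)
The plan is to read off the decomposition directly from the universal cocone $r : X \to X_\inv$ together with the disintegration hypothesis. Given an invariant state $p : I \to X$, I would set $q := r \circ p : I \to X_\inv$ and let $k := r^+_p : X_\inv \to X$ be the disintegration of $p$ along $r$, which exists because $X$ has disintegrations and $r$ is deterministic. Discarding the $X_\inv$-output in the defining square of the disintegration immediately yields $k \circ q = r^+_p \circ r \circ p = p$, so $(q,k)$ is a decomposition of $p$. It then remains to show that $k$ is $q$-almost surely ergodic, and here I would invoke \Cref{ergasuni}: once $k$ is known to be $q$-almost surely invariant, it suffices to check that $r \circ k : X_\inv \to X_\inv$ is $q$-almost surely deterministic.

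The determinism clause is the easy half. Since $r$ is deterministic, \Cref{id-as} gives $r \circ k = r \circ r^+_p \ase{q} \id_{X_\inv}$. As the identity is deterministic and the determinism condition \eqref{det} is equational, a morphism that is $q$-almost surely equal to it is $q$-almost surely deterministic; hence $r \circ k$ is $q$-almost surely deterministic. This step relies only on the routine principle that $q$-almost sure equality is a congruence, so that an equational property holding on the nose for one morphism holds $q$-almost surely for any morphism $q$-almost surely equal to it.

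The crux is verifying that $k = r^+_p$ is $q$-almost surely invariant, i.e.\ that $m \circ r^+_p \ase{q} r^+_p$ for every $m \in M$, and this is where I expect the only real difficulty to lie. I would exploit the three available facts — invariance of the state ($m \circ p = p$), determinism of the dynamics ($\cop_X \circ m = (m \otimes m)\circ\cop_X$), and invariance of the cocone ($r \circ m = r$) — to massage the defining square of the disintegration. Concretely, applying $m \otimes \id_{X_\inv}$ to the identity $(\id_X \otimes r)\circ \cop_X \circ p = (r^+_p \otimes \id_{X_\inv})\circ \cop_{X_\inv}\circ q$ turns its right-hand side into $\bigl((m\circ r^+_p)\otimes\id_{X_\inv}\bigr)\circ\cop_{X_\inv}\circ q$, while its left-hand side becomes $(m \otimes r)\circ\cop_X\circ p$. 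Separately, substituting $p = m\circ p$, pushing the copy through the deterministic $m$, and then using $r\circ m = r$ shows $(\id_X\otimes r)\circ\cop_X\circ p = (m\otimes r)\circ\cop_X\circ p$. Chaining these equalities identifies $\bigl((m\circ r^+_p)\otimes\id_{X_\inv}\bigr)\circ\cop_{X_\inv}\circ q$ with $(r^+_p\otimes\id_{X_\inv})\circ\cop_{X_\inv}\circ q$, which is exactly the statement $m\circ r^+_p \ase{q} r^+_p$ (up to cocommutativity of copy).

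With $k$ shown to be $q$-almost surely invariant and $r\circ k$ shown to be $q$-almost surely deterministic, \Cref{ergasuni} concludes that $k$ is $q$-almost surely ergodic, completing the factorization $p = k \circ q$. The notable feature of this route is that it is purely diagrammatic: no quantitative limiting argument and no appeal to the cardinality of $M$ enter anywhere.
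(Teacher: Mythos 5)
Your proof is correct and follows essentially the same route as the paper's: the same decomposition $q = r\circ p$, $k = r^+_p$, the same appeal to \Cref{ergasuni} and \Cref{id-as} for the determinism clause, and the same diagrammatic chain (disintegration identity, right-invariance of $r$, determinism of $m$, left-invariance of $p$, disintegration identity again) for the almost-sure invariance of $r^+_p$. The only difference is organizational — you split that chain into two identities and glue them, where the paper runs it as a single string-diagram computation — which is not a substantive departure.
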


Let's now instantiate the theorem in $\cat{Stoch}$, recalling that Markov colimits of deterministic dynamical systems always exist (\Cref{invsigmacolimit}).

\begin{corollary}\label{mainstoch}
 Let $X$ be a deterministic dynamical system in $\Stoch$ with monoid $M$ (for example, with $M$ acting via measurable functions $m:X\to X$).
 Suppose that the measurable space $X$ satisfies a disintegration theorem (for example, if it is a standard Borel space). 
 
 Then every invariant measure on $X$ can be written as a mixture of ergodic states. 
\end{corollary}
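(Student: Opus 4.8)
The plan is to prove the general synthetic statement \Cref{mainthm}, from which \Cref{mainstoch} is immediate by instantiation in $\cat{Stoch}$. Fix an invariant state $p:I\to X$ and let $r:X\to X_\inv$ be the universal cocone, which by definition of the Markov colimit is deterministic and invariant. Since $X$ has disintegrations and $r$ is deterministic, the disintegration $r^+_p:X_\inv\to X$ exists; I set $q:=r\circ p$ and $k:=r^+_p$, and claim that $(q,k)$ is the desired decomposition. Writing $\psi:=(\id_X\tensor r)\circ\cop_X\circ p:I\to X\tensor X_\inv$ for the joint state of $p$ and $r$, the defining property of $k$ reads $\psi=(k\tensor\id)\circ\cop_{X_\inv}\circ q$. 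Discarding the $X_\inv$-leg then recovers $p$ on one side (using $\del\circ r=\del$) and $k\circ q$ on the other (by the counit law), so $k\circ q=p$.

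It remains to verify that $k$ is $q$-almost surely ergodic. For this I invoke \Cref{ergasuni}: once $k$ is known to be $q$-almost surely invariant, it is $q$-almost surely ergodic if and only if $r\circ k$ is $q$-almost surely deterministic. The latter condition is immediate from \Cref{id-as}: because $r$ is deterministic, $r\circ k=r\circ r^+_p\ase{q}\id$, and the identity is deterministic, so $r\circ k$ is $q$-almost surely deterministic.

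The main obstacle is therefore the $q$-almost sure invariance of $k$, i.e.\ $m\circ k\ase{q} k$ for each $m\in M$. The strategy is to show that $m\circ k$ is \emph{also} a disintegration of $p$ along $r$, and then conclude by the $q$-almost sure uniqueness of disintegrations (noted immediately after the definition of disintegration). The heart of the argument is the identity $(m\tensor\id)\circ\psi=\psi$, which I would establish via
\begin{align*}
(m\tensor\id)\circ\psi &= (m\tensor r)\circ\cop_X\circ p = (\id_X\tensor r)\circ(m\tensor m)\circ\cop_X\circ p \\
&= (\id_X\tensor r)\circ\cop_X\circ m\circ p = \psi,
\end{align*}
where the second equality uses invariance of $r$ (so $r=r\circ m$), the third uses that $m$ is deterministic (so $m$ commutes with $\cop_X$), and the last uses invariance of $p$ (so $m\circ p=p$). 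Here all three hypotheses of the theorem are used simultaneously, which is why I expect this step to be the crux. Given it, $(m\circ k\tensor\id)\circ\cop_{X_\inv}\circ q=(m\tensor\id)\circ\psi=\psi$ exhibits $m\circ k$ as a disintegration of $p$ along $r$, so $m\circ k\ase{q} k$. Thus $k$ is $q$-almost surely invariant, and by \Cref{ergasuni} it is $q$-almost surely ergodic.

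Finally, \Cref{mainstoch} follows by instantiating \Cref{mainthm} in $\cat{Stoch}$: the Markov colimit exists by \Cref{invsigmacolimit}, the disintegration hypothesis is exactly the assumed disintegration theorem, and the conclusion that $p=k\circ q$ with $k$ being $q$-almost surely ergodic unwinds, via the characterization of $q$-almost sure ergodicity in $\cat{Stoch}$, to the statement that the invariant measure $p$ is a mixture of (almost surely) ergodic measures.
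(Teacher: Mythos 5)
Your proposal is correct and follows essentially the same route as the paper: the same decomposition $q = r\circ p$, $k = r^+_p$, the same use of \Cref{id-as} and \Cref{ergasuni} for almost-sure determinism of $r\circ k$, and the same core computation (right-invariance of $r$, determinism of $m$, left-invariance of $p$) for almost-sure invariance. The only cosmetic difference is that you package that computation as ``$m\circ k$ is also a disintegration of $p$ along $r$, hence $q$-a.s.\ equal to $k$ by uniqueness,'' whereas the paper chains the same equalities directly between two applications of the disintegration equation --- these are trivially equivalent formulations.
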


Compare this with the traditional statements~\cite[Theorem~5.1.3]{viana-oliveira}, \cite[Proposition~4]{tao}.
Note also that the statement holds regardless of the cardinality or extra structure of the monoid $M$.

Now that all the necessary categorical setting is in place, the proof is very concise. 
Before looking at it, let's explain the intuition behind it a little.
Since the Markov colimit exists (for example, the invariant $\sigma$-algebra), we have a ``weak quotient'' map $r:X\to X_\inv$ which intuitively forgets the distinction between points that lie on the same orbit. 
We then construct a disintegration $r^+_p:X_\inv\to X$. Intuitively, this kernel maps each orbit back to a measure on $X$ which, is 
\begin{itemize}
 \item Supported on the given orbit (i.e.~$r^+$ is a stochastic section of $r$ almost surely);
 \item Uniform within the given orbit (i.e.~$r^+$ is almost surely ergodic).
\end{itemize}
This disintegration expresses then $p$ as a mixture of ergodic measures.

Let's now look at the proof. 

\begin{proof}[Proof of \Cref{mainthm}]
 Let $p:I\to X$ be an invariant state. Consider the map $r:X\to X_\inv$, and form the disintegration $r^+_p:X_\inv\to X$.
 $$
 \tikzfig{cond-q}
 $$
 By marginalizing the equation above over $X_\inv$, we see that $p=r^+_p \circ r\circ p$, i.e.~we are decomposing $p$ into the composition of $r\circ p:I\to X_\inv$ followed by $r^+_p$. Now denote $r\circ p$ by $q$. 

 Let's show that $r^+_p$ is $q$-almost surely ergodic.
 To see that $r^+_p$ is $q$-almost surely left-invariant, note that for all $m\in M$, 
 $$
 \tikzfig{proof-as-inv}
 $$ 
 using, in order,
 the definition of $r^+_p$ as a disintegration, right-invariance of $r$, determinism of $m$, left-invariance of $p$, and again the definition of $r^+_p$ as a disintegration.
 
 By \Cref{ergasuni}, all that remains to be shown in order to prove $q$-almost sure ergodicity is that $r\circ r^+_p$ is $q$-almost surely deterministic.
 To see this, note that since $r$ is deterministic, we can apply \Cref{id-as} with $r$ in place of $f$. The proposition tells us that $r\circ r^+_p$ is $q$-almost surely equal to the identity, which is deterministic.
\end{proof}

As one can see, at this level all the measure-theoretic and analytic details are taken care of by the formalism, and one can focus on the conceptual reasoning.

\bibliographystyle{plain}
\bibliography{markov}

\begin{thebibliography}{10}

\bibitem{notccc}
Robert~J. Aumann.
\newblock Borel structures for function spaces.
\newblock {\em Illinois Journal of Mathematics}, 5:614--630, 1961.

\bibitem{dresden}
Mike Behrisch, Sebastian Kerkhoff, Reinhard Pöschel, Friedrich~Martin
  Schneider, and Stefan Siegmund.
\newblock Dynamical systems in categories.
\newblock {\em Applied Categorical Structures}, 25:29--57, 2017.

\bibitem{bogachev}
V.~I. Bogachev.
\newblock {\em Measure Theory. {V}ol. {I}, {II}}.
\newblock Springer, 2000.

\bibitem{chojacobs2019strings}
Kenta Cho and Bart Jacobs.
\newblock Disintegration and {B}ayesian inversion via string diagrams.
\newblock {\em Math. Structures Comput. Sci.}, 29:938--971, 2019.
\newblock \href{https://arxiv.org/abs/1709.00322}{arXiv:1709.00322}.

\bibitem{hewitt-savage}
Hewitt E. and L.~J. Savage.
\newblock Symmetric measures on cartesian products.
\newblock {\em Transactions of the American Mathematical Society}, 80:470--501,
  1955.

\bibitem{freeGScat}
T.~Fritz and Wendong Liang.
\newblock Free gs-monoidal categories and free markov categories, 2022.
\newblock \href{https://arxiv.org/abs/2204.02284}{arXiv:2204.02284}.

\bibitem{fritz2019synthetic}
Tobias Fritz.
\newblock A synthetic approach to {M}arkov kernels, conditional independence
  and theorems on sufficient statistics.
\newblock {\em Adv. Math.}, 370:107239, 2020.
\newblock \href{https://arxiv.org/abs/1908.07021}{arXiv:1908.07021}.

\bibitem{definetti-markov}
Tobias Fritz, Tomáš Gonda, and Paolo Perrone.
\newblock De finetti's theorem in categorical probability.
\newblock {\em Journal of Stochastic Analysis}, 2(4), 2021.

\bibitem{fritzrischel2019zeroone}
Tobias Fritz and Eigil~Fjeldgren Rischel.
\newblock The zero-one laws of {K}olmogorov and {H}ewitt--{S}avage in
  categorical probability.
\newblock {\em Compositionality}, 2:3, 2020.
\newblock
  \href{https://compositionality-journal.org/papers/compositionality-2-3/}{compositionality-journal.org/papers/compositionality-2-3}.

\bibitem{gadducci-thesis}
Fabio Gadducci.
\newblock {\em On the algebraic approach to concurrent term rewriting}.
\newblock PhD thesis, University of Pisa, 1996.

\bibitem{heunen-kammar-staton-yang-a-convenient-category-for-higher-order-probability-theory}
Chris Heunen, Ohad Kammar, Sam Staton, and Hongseok Yang.
\newblock A convenient category for higher-order probability theory.
\newblock In {\em 2017 32nd Annual ACM/IEEE Symposium on Logic in Computer
  Science (LICS)}, pages 1--12, June 2017.

\bibitem{uncountable-22}
Asgar Jamneshan.
\newblock An uncountable furstenberg-zimmer structure theory, 2021.
\newblock Preprint on
  \href{arXiv:2101.00685}{https://arxiv.org/abs/2101.00685}.

\bibitem{uncountable-21}
Asgar Jamneshan, Polona Durcik, Rachel Greenfeld, Annina Iseli, and José
  Madrid.
\newblock An uncountable ergodic roth theorem and applications, 2021.
\newblock Preprint on
  \href{arXiv:2101.00685}{https://arxiv.org/abs/2101.00685}.

\bibitem{uncountable-19}
Asgar Jamneshan and Terence Tao.
\newblock An uncountable moore-schmidt theorem, 2019.
\newblock Preprint on
  \href{arXiv:1911.12033}{https://arxiv.org/abs/1911.12033}.

\bibitem{uncountable-measure-theory}
Asgar Jamneshan and Terence Tao.
\newblock Foundational aspects of uncountable measure theory: Gelfand duality,
  riesz representation, canonical models, and canonical disintegration, 2020.
\newblock Preprint on
  \href{arXiv:2101.00685}{https://arxiv.org/abs/2101.00685}.

\bibitem{uncountable-20}
Asgar Jamneshan and Terence Tao.
\newblock An uncountable mackey-zimmer theorem, 2020.
\newblock Preprint on
  \href{arXiv:2010.00574}{https://arxiv.org/abs/2010.00574}.

\bibitem{maclane-cwm1998}
Saunders Mac~Lane.
\newblock {\em Categories for the working mathematician}, volume~5 of {\em
  Graduate Texts in Mathematics}.
\newblock Springer-Verlag, New York, second edition, 1998.

\bibitem{ours_LICS}
Sean Moss and Paolo Perrone.
\newblock Probability monads with submonads of deterministic states.
\newblock In {\em 2022 37th Annual ACM/IEEE Symposium on Logic in Computer
  Science (LICS)}. IEEE Computer Society, 2022.

\bibitem{sabok-staton-stein-wolman-probabilistic-programming-semantics-for-name-generation}
Marcin Sabok, Sam Staton, Dario Stein, and Michael Wolman.
\newblock Probabilistic programming semantics for name generation.
\newblock {\em Proc. ACM Program. Lang.}, 5(POPL), jan 2021.

\bibitem{tao}
Terence Tao.
\newblock What's new. {Ergodicity}, 254a lecture 9, 2008.
\newblock Mathematical blog with proofs,
  \href{https://terrytao.wordpress.com/2008/02/04/254a-lecture-9-ergodicity}{https://terrytao.wordpress.com/2008/02/04/254a-lecture-9-ergodicity}.

\bibitem{viana-oliveira}
Marcelo Viana and Krerley Oliveira.
\newblock {\em Foundations of Ergodic Theory}.
\newblock Cambridge University Press, 2016.

\bibitem{winkler}
Gerhard Winkler.
\newblock {\em {Choquet order and simplices with applications in probabilistic
  models}}.
\newblock {Lecture Notes in Mathematics}. Springer, 1985.

\end{thebibliography}

\appendix

\section{The invariant sigma-algebra as a weak quotient}\label{weakquotient}

Let's now explain why $X$ with the invariant $\sigma$-algebra can play the role of a space of orbits. (See \Cref{dynsyscat}, \Cref{markovquotient}, and \Cref{mainsec} for context.)

We first consider the intrinsic `indistinguishability relation' on a measurable space.
For a measurable space $(Y,\Sigma_Y)$, this is the equivalence relation given by $y\sim y'$ if and only if for every measurable set $A\in\Sigma_Y$, $y\in A$ if (and only if) $y'\in Y$.
The relation $\sim$ is discrete (coincides with equality) if, for example,
\begin{itemize}
\item $Y$ is a $T_0$ topological space (for example sober or Hausdorff) equipped with the Borel $\sigma$-algebra;
\item $Y$ has the property that all singletons are measurable.
\end{itemize}
In particular, $\sim$ is discrete if $Y$ is a standard Borel space.
 
\begin{proposition}
 The relation $\sim$ is the kernel of the map
 $$
 \begin{tikzcd}[row sep=0]
  Y \ar{r}{\delta} & PY \\
  y \ar[mapsto]{r} & \delta_y
 \end{tikzcd}
 $$
 assigning to each point $y$ the Dirac distribution over it. In particular, the relation $\sim$ is discrete if and only if the map $\delta$ is injective.
\end{proposition}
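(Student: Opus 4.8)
The plan is to show that the relation $\sim$ and the kernel of $\delta$ are defined by literally the same condition, after which the ``in particular'' clause drops out immediately.

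First I would recall that the \emph{kernel} of a map $f$ is the equivalence relation identifying $y$ and $y'$ exactly when $f(y)=f(y')$; thus the kernel of $\delta$ relates $y$ and $y'$ precisely when $\delta_y=\delta_{y'}$ as probability measures on $Y$. The one fact to invoke is that a probability measure is determined by its values on all measurable sets, so $\delta_y=\delta_{y'}$ holds if and only if $\delta_y(A)=\delta_{y'}(A)$ for every $A\in\Sigma_Y$. Since $\delta_y(A)=1_A(y)$, this is the same as requiring $1_A(y)=1_A(y')$ for all measurable $A$, i.e.\ $y\in A \Leftrightarrow y'\in A$ for every $A\in\Sigma_Y$. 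But this is exactly the defining condition of $y\sim y'$, so $\sim$ coincides with the kernel of $\delta$.

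For the second sentence I would simply unwind the two notions against the identification just established. Discreteness of $\sim$ means that $y\sim y'$ forces $y=y'$; under the identification this reads ``$\delta_y=\delta_{y'}$ implies $y=y'$'', which is precisely injectivity of $\delta$, and the converse is immediate. There is essentially no obstacle in this argument: it is a direct unfolding of definitions, the only (routine) point being the observation that two Dirac measures are equal if and only if the underlying points lie in exactly the same measurable sets, which is what ties the measure-theoretic equality $\delta_y=\delta_{y'}$ to the set-theoretic relation $\sim$.
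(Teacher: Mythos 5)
Your proof is correct: unwinding the definition of the kernel of $\delta$, using that two measures are equal iff they agree on every measurable set, and noting $\delta_y(A)=1_A(y)$ reduces the statement exactly to the defining condition of $\sim$, with the injectivity clause following immediately. The paper states this proposition without proof, treating it as precisely this kind of definitional unfolding, so your argument is the intended one and there is nothing to add.
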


(See \cite{ours_LICS} for additional context on injectivity of the map $\delta$.)

\begin{proposition}
 The following conditions are equivalent for elements $y,y'$ of a measurable space $(Y,\Sigma_Y)$:
 \begin{itemize}
 \item $y\sim y'$;
 \item For every measurable real function $f:Y\to\R$, $f(y)=f(y')$;
 \item For every Markov kernel $k$ from $Y$ to a space $Z$ and every $C\in\Sigma_Z$, $k(C|y)=k(C|y')$.
\end{itemize}
\end{proposition}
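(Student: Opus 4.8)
The plan is to prove the three conditions equivalent by establishing the cyclic chain $(1)\Rightarrow(2)\Rightarrow(3)\Rightarrow(1)$, where I label the conditions $(1)$ for $y\sim y'$, $(2)$ for agreement on all measurable real functions, and $(3)$ for agreement on all Markov kernel evaluations, in the order listed. Each implication is short, and the only real conceptual point is the observation that bridges $(2)$ and $(3)$: for a fixed kernel $k$ and set $C$, the evaluation $x\mapsto k(C|x)$ is \emph{itself} a measurable real-valued function, by the very definition of a Markov kernel. Once this is noticed, the cycle closes almost mechanically.

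First I would prove $(1)\Rightarrow(2)$. Given a measurable $f:Y\to\R$, each preimage $f^{-1}((-\infty,r])$ lies in $\Sigma_Y$ for every $r\in\R$, so indistinguishability of $y$ and $y'$ gives $f(y)\le r \iff f(y')\le r$ for all real $r$; since $\R$ is order-dense and separated, this forces $f(y)=f(y')$. Next, for $(2)\Rightarrow(3)$, I would fix a Markov kernel $k:Y\to Z$ and a set $C\in\Sigma_Z$; by definition the assignment $x\mapsto k(C|x)$ is a measurable function $Y\to[0,1]\subseteq\R$, so applying $(2)$ to it directly yields $k(C|y)=k(C|y')$.

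Finally, for $(3)\Rightarrow(1)$, I would instantiate $(3)$ with a single well-chosen kernel: take $Z=Y$ and let $k$ be the deterministic kernel induced by the identity map via~\eqref{kernelfrommeasure}, so that $k(A|x)=1_A(x)$. Then for any $A\in\Sigma_Y$, condition $(3)$ gives $1_A(y)=1_A(y')$, that is, $y\in A \iff y'\in A$, which is exactly $y\sim y'$. There is no substantive obstacle in this argument; it is entirely routine once the three conditions are unpacked. The one spot meriting a word of care is the closing step of $(1)\Rightarrow(2)$, where one must justify that testing against half-lines (rather than arbitrary Borel sets) suffices to equate two real numbers, but this is immediate.
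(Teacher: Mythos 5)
Your proof is correct. The paper states this proposition without giving any proof at all (it is presented as an immediate consequence of the definitions), so there is no argument of the paper's to compare against; your cyclic chain $(1)\Rightarrow(2)\Rightarrow(3)\Rightarrow(1)$, hinging on the observation that $x\mapsto k(C|x)$ is by definition a measurable $[0,1]$-valued function and closing the loop with the identity kernel $k(A|x)=1_A(x)$, is exactly the natural way to fill that gap.
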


Therefore, Markov kernels are ``blind'' to indistinguishable elements. In particular, two measurable spaces can be \emph{isomorphic in the category of deterministic Markov kernels} $\cat{Stoch}_\det$ \emph{even if they have different underlying sets}. The following proposition illustrates a canonical example.

\begin{proposition}\label{quotiso}
 Let $(Y,\Sigma_Y)$ be a measurable space, let $Y/_\sim$ be the quotient space w.r.t.~the indistinguishability relation, and denote by $q:Y\to Y/_\sim$ the quotient map $y\mapsto [y]$. 
 If we equip $Y/_\sim$ with the quotient $\sigma$-algebra, so that $q$ is measurable, the Markov kernel induced by $q$ is an isomorphism of $\cat{Stoch}_\det$.
 That is, the deterministic kernel induced by $q$ has a deterministic (i.e.~zero-one) inverse in the category of Markov kernels.
\end{proposition}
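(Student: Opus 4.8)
The plan is to exhibit an explicit deterministic two-sided inverse to the kernel $\hat q$ induced by $q$ via the construction \eqref{kernelfrommeasure}. The key observation, on which everything rests, is that every measurable set $A\in\Sigma_Y$ is $\sim$-saturated: if $y\in A$ and $y\sim y'$, then $y'\in A$, directly by the definition of the indistinguishability relation $\sim$. Consequently $q^{-1}(q(A))=A$ for every $A\in\Sigma_Y$, and since $q$ is surjective the assignment $A\mapsto q(A)$ is a bijection $\Sigma_Y\cong\Sigma_{Y/_\sim}$ with inverse $\bar A\mapsto q^{-1}(\bar A)$; in particular $q(A)$ is a measurable subset of $Y/_\sim$ for each $A\in\Sigma_Y$.

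Next I would define the candidate inverse $j:Y/_\sim\to Y$ by setting, for each $A\in\Sigma_Y$ and each class $[y]$,
$$ j(A\mid[y]) \coloneqq 1_A(y) $$
for any representative $y$ of the class. This is independent of the chosen representative precisely because $A$ is saturated, and the assignment $[y]\mapsto j(A\mid[y])$ is measurable because its super-level set $\{[y] : j(A\mid[y])=1\}$ equals $q(A)$, which we just saw is measurable in $Y/_\sim$. Thus $j$ is a well-defined Markov kernel, and since $j(\cdot\mid[y])=\delta_y$ is a zero-one measure, $j$ is deterministic.

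It then remains to check that $j$ and $\hat q$ are mutually inverse in $\cat{Stoch}$. Both are Dirac-type kernels, so the Chapman--Kolmogorov integrals collapse to evaluations: for $j\circ\hat q$ one integrates $j(A\mid-)$ against $\hat q(\cdot\mid y)=\delta_{[y]}$, recovering $1_A(y)$, i.e.\ the identity kernel on $Y$; dually, integrating $\hat q(\bar A\mid-)$ against $j(\cdot\mid[y])=\delta_y$ recovers $1_{\bar A}([y])$, the identity kernel on $Y/_\sim$. This establishes $j\circ\hat q=\id_Y$ and $\hat q\circ j=\id_{Y/_\sim}$, so $\hat q$ is an isomorphism of $\cat{Stoch}_\det$ with deterministic inverse $j$.

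The only genuine content is the saturation observation and the resulting identification of the two $\sigma$-algebras, which is exactly what makes $j$ simultaneously well-defined (independent of representatives) and measurable; once that is in place, the verification that the two composites are identities is a routine collapse of integrals against Dirac kernels, with no remaining measure-theoretic subtleties.
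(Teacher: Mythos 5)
Your proof is correct, and it is essentially the argument the paper itself uses: although Proposition~\ref{quotiso} is stated without proof, the paper proves the analogous Theorem~\ref{quotiso2} by exactly your route --- a saturation lemma (\Cref{invzig}), the identity $q^{-1}(q(A))=A$ giving a bijection of $\sigma$-algebras (\Cref{qbijection2}), and the indicator-valued kernel $h(A\mid[x])\coloneqq 1_A(x)$ as the deterministic two-sided inverse, verified by the same Dirac-kernel collapse. In the indistinguishability setting your saturation step is even easier (it holds by definition of $\sim$ rather than via invariance), so nothing further is needed.
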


(In this section $q$ will denote a quotient map, rather than a state or measure.)

Note that an isomorphism in the category of deterministic kernels, even though the underlying sets may differ, does imply that the \emph{sigma algebras} are isomorphic. More on this shortly. 

Let's now return to dynamical systems. Let $X$ be a dynamical system in $\cat{Stoch}$ where the monoid $M$ acts by measurable functions $m : X \to X$ in $\cat{Meas}$ --- recall that this is a special case of a deterministic dynamical systems in our terminology. We will show that the traditional quotient and the space $X_\inv$ are isomorphic in $\cat{Stoch}$ via deterministic (in our terminology) kernels.

Denote by $\sim_M$ the equivalence relation on $X$ generated by the action of $M$. As usual, $x\sim_M y$ if and only if there exists a ``zig-zag'' connecting $x$ and $y$, explicitly, a finite sequence $m_1,\dots,m_n\in M$ and elements $x_0,x_1,\dots,x_n\in X$ with $x_0=x$, $x_n=y$, and such that
\begin{itemize}
 \item For odd $i$, $m_i(x_{i-1})=x_i$.
 \item For even $i$, $m_i(x_i)=x_{i-1}$.
\end{itemize}
For $n=6$, the situation is represented by the following picture. 
$$
\begin{tikzcd}
 & x_1 && x_3 && x_5 \\
 x=x_0 \ar[mapsto]{ur}{m_1} && x_2 \ar[mapsto]{ul}{m_2} \ar[mapsto]{ur}{m_3} && x_4 \ar[mapsto]{ul}{m_4} \ar[mapsto]{ur}{m_5} && x_6=y \ar{ul}{m_6}
\end{tikzcd}
$$
(Note that such a zig-zag is necessary if $M$ is not a group.)

Now let $X/M$ be the quotient of $X$ w.r.t.~the relation $\sim_M$, and denote by $q:X\to X/M$ the quotient map. 
 If we equip $X/M$ with the quotient $\sigma$-algebra, so that $q$ is measurable, the outer triangle in the following diagram commutes for all $m\in M$. 
$$
   \begin{tikzcd}[row sep=small]
    X \ar{dd}[swap]{m} \ar{dr}[swap]{r} \ar[bend left=10]{drr}{q} \\
    & X_\inv \ar[dashed]{r}[near start]{\tilde q} & X/M \\
    X \ar{ur}{r} \ar[bend right=10]{urr}[swap]{q}
   \end{tikzcd}
$$
Therefore the map $q:X\to X/M$ is also measurable for the invariant $\sigma$-algebra, i.e.~it descends to $X_\inv$ as a measurable map $\tilde q$, making the diagram above commute.

\begin{theorem}\label{quotiso2}
 Let $X$ be a deterministic dynamical system in $\cat{Stoch}$ with monoid $M$ acting by measurable functions $m:X\to X$. 
 Construct the quotient map $q:X\to X/M$ as described above.
 Then the kernel $X_\inv\to X/M$ induced by the measurable map $\tilde q$ is an isomorphism of $\cat{Stoch}_\det$. That is, it has a deterministic (i.e.~zero-one) inverse in the category of Markov kernels. 
\end{theorem}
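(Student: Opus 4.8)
The plan is to avoid inverting $\tilde q$ on the level of underlying sets---which is hopeless in general, since $\tilde q$ collapses each orbit to a point and need not even be injective---and instead to build the inverse kernel directly from a bijection between the two $\sigma$-algebras involved. The key observation is that a measurable set $A \in \Sigma_X$ is invariant in the sense of \Cref{definvset} precisely when it is saturated for $\sim_M$: unwinding $m^{-1}(A) = A$ gives $x \in A \iff m(x) \in A$ for all $m$ and $x$, which is exactly closure under the generating relations of $\sim_M$. Since the quotient $\sigma$-algebra on $X/M$ consists by definition of those $B$ with $q^{-1}(B) \in \Sigma_X$, and such preimages are automatically saturated, the pullback $B \mapsto q^{-1}(B)$ is a bijection from the quotient $\sigma$-algebra onto the invariant $\sigma$-algebra (recall from \Cref{invsigmacolimit} that $X_\inv$ is $X$ equipped with this $\sigma$-algebra).

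First I would use this bijection to define the candidate inverse $s : X/M \to X_\inv$ by setting, for each invariant set $A$ and each class $[x] \in X/M$,
$$
s(A \mid [x]) \coloneqq 1_A(x),
$$
for any representative $x$ of $[x]$. This is well defined because $A$ is saturated, so the value is independent of the chosen representative, and it is a zero-one kernel. Measurability in $[x]$ holds because $A = q^{-1}(\tilde A)$ for a unique measurable $\tilde A \subseteq X/M$, so $[x] \mapsto 1_A(x) = 1_{\tilde A}([x])$ is measurable on $X/M$.

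Next I would verify that $s$ is a two-sided inverse of the kernel induced by $\tilde q$, recalling that as a kernel $\tilde q(B \mid x) = 1_B([x]) = 1_{q^{-1}(B)}(x)$. Computing via the Chapman--Kolmogorov formula, for invariant $A$ one gets $(s \circ \tilde q)(A \mid x) = s(A \mid [x]) = 1_A(x)$, so $s \circ \tilde q$ is the identity kernel on $X_\inv$; and for $B$ measurable in $X/M$ one gets $(\tilde q \circ s)(B \mid [x]) = s(q^{-1}(B) \mid [x]) = 1_{q^{-1}(B)}(x) = 1_B([x])$, so $\tilde q \circ s$ is the identity on $X/M$. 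Since both $s$ and $\tilde q$ take only the values $0$ and $1$, they are deterministic, and hence $\tilde q$ is an isomorphism of $\cat{Stoch}_\det$ as claimed.

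The main point to keep in mind---more a subtlety than a genuine obstacle---is that the invariant $\sigma$-algebra need not separate distinct orbits, so two classes $[x] \neq [x']$ in $X/M$ may be \emph{indistinguishable} as elements of the measurable space $X/M$. This is exactly why one cannot hope for a set-theoretic inverse and must argue through the $\sigma$-algebras; it is harmless here because, as noted earlier in this section, Markov kernels are blind to indistinguishable points, and the computations above never refer to representatives beyond the well-defined quantity $1_A(x)$. The argument is in the same spirit as \Cref{quotiso}, but applied directly to the pair $X_\inv$ and $X/M$ rather than to an indistinguishability quotient.
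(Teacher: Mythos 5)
Your proof is correct and takes essentially the same route as the paper: your observations that invariant measurable sets are exactly the saturated ones and that $B \mapsto q^{-1}(B)$ is a bijection between the quotient and invariant $\sigma$-algebras are precisely the paper's \Cref{invzig} and \Cref{qbijection2}, and your kernel $s(A \mid [x]) = 1_A(x)$, with its well-definedness, measurability, and two-sided-inverse verifications, is identical to the paper's inverse kernel $h$. There are no substantive differences.
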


Note that this is the case regardless of the particular $\sigma$-algebra of the original space $X$, and regardless of the cardinality or structure of the monoid $M$.

\begin{lemma}\label{invzig}
 Assume the hypotheses of \Cref{quotiso2}. Let $A\subseteq X$ be an invariant set. Then if $x\sim_M y$, then $x\in A$ if and only if $y\in A$. 
\end{lemma}

\begin{proof}[Proof of \Cref{invzig}]
 Invariance of $A$ means that $x\in A$ if and only if $m(x)\in A$ for all $m\in M$. Now let $x\sim y$, so that there is a zig-zag given by $m_1,\dots,m_n\in M$ and $x=x_0,\dots,x_n=y$ as above. Then for all $i=1,\dots n$ (odd or even), we have $x_{i-1}\in A$ if and only if $x_i\in A$. Therefore $x=x_0\in A$ if and only if $y=x_n\in A$.
\end{proof}

\begin{lemma}\label{qbijection2}
 Under the hypotheses of \Cref{quotiso2}, the preimage map $q^{-1}:\Sigma_{X/M}\to \Sigma_{X_\inv}$ is a bijection.
\end{lemma}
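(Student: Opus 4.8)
The plan is to exhibit $q^{-1}$ as a well-defined injection of $\Sigma_{X/M}$ into $\Sigma_{X_\inv}$ whose image is all of $\Sigma_{X_\inv}$, establishing these three facts in turn. Throughout I would use the two basic properties of the quotient map $q$: it is surjective, and it is constant on orbits, so that $q\circ m = q$ for every $m\in M$.

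First I would check that $q^{-1}$ really does take values in the invariant $\sigma$-algebra. If $B\in\Sigma_{X/M}$ then $q^{-1}(B)\in\Sigma_X$ by the very definition of the quotient $\sigma$-algebra. To see that $q^{-1}(B)$ is invariant I compute $m^{-1}(q^{-1}(B)) = (q\circ m)^{-1}(B) = q^{-1}(B)$, using $q\circ m = q$. Hence $q^{-1}(B)$ is a measurable invariant set, i.e.\ an element of $\Sigma_{X_\inv}$, and $q^{-1}$ is well-defined as a map $\Sigma_{X/M}\to\Sigma_{X_\inv}$.

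Injectivity is then immediate from surjectivity of $q$: for any subset $B\subseteq X/M$ one has $q(q^{-1}(B)) = B$, so $q^{-1}$ is injective on all of $\mathcal{P}(X/M)$ and in particular on $\Sigma_{X/M}$.

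The only substantial step is surjectivity, and this is exactly where \Cref{invzig} enters. Given an invariant measurable set $A\in\Sigma_{X_\inv}$, I would take $B \coloneqq q(A)$ as the candidate preimage. In general one has only the inclusion $A\subseteq q^{-1}(q(A))$, with equality precisely when $A$ is saturated with respect to $\sim_M$; but \Cref{invzig} says exactly that an invariant set is a union of $\sim_M$-equivalence classes, so in fact $q^{-1}(q(A)) = A$. Consequently $q^{-1}(q(A)) = A\in\Sigma_X$, which by definition of the quotient $\sigma$-algebra means $q(A)\in\Sigma_{X/M}$, and the same identity $q^{-1}(q(A)) = A$ exhibits $A$ in the image of $q^{-1}$. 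The one genuine obstacle is thus the saturation property of invariant sets, and this is handled entirely by \Cref{invzig}; once it is in hand the bijection follows formally.
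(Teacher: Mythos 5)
Your proof is correct and follows essentially the same route as the paper's: injectivity from surjectivity of $q$, and surjectivity via the saturation identity $q^{-1}(q(A)) = A$, which rests on \Cref{invzig}. You are in fact slightly more explicit than the paper, which relegates the well-definedness of $q^{-1}$ (via $q\circ m = q$) to the discussion preceding \Cref{quotiso2} and spells out the measurability of $q(A)$ only later, in the proof of the theorem itself.
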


\begin{proof}[Proof of \Cref{qbijection2}]
 First of all, $q^{-1}:\Sigma_{X/M}\to \Sigma_{X_\inv}$ is injective since $q:X_\inv\to X/M$ is surjective by construction.
 Moreover, we have that 
 \begin{equation}\label{qqA2}
 q^{-1}(q(A)) = A
 \end{equation}
 for each measurable invariant set $A\in\Sigma_{X_\inv}$. We have as usual $A\subseteq q^{-1}(q(A)$. For the reverse inclusion, let $x\in q^{-1}(q(A))$, i.e.~such that $[x]\in q(A)$.
 This means that there exists $a\in A$ with $[x]=[a]$, i.e.~$x\sim a$. But since $A$ is invariant, by \Cref{invzig}, if $a\in A$ and $x\sim_M a$, then $x\in A$ as well. 
\end{proof}

\begin{proof}[Proof of \Cref{quotiso2}]
 Construct the kernel $h:X/M\to X_\inv$ as follows, for each $[x]\in X/M$ and each invariant $A\in\Sigma_{X_\inv}$.
 $$
 h(A|[x]) \coloneqq 1_A(x) = \begin{cases}
                              1 & x\in A ; \\
                              0 & x\notin A .
                             \end{cases}
 $$
 This is well defined: by \Cref{invzig}, and since $A$ is invariant, for $x\sim_M y$, $x\in A$ if and only if $y\in A$. 
 
 To see measurability in $[x]$, it suffices to prove that the following subset of $X/M$ is measurable for each invariant set $A$.
 \begin{align*}
  h^*(A) &\coloneqq \{[x]\in X/M : h(A|[x]) = 1 \} \\
  &= \{[x]\in X/M : x\in A \} \\
  &= q(A) .
 \end{align*}
 By the definition of quotient $\sigma$-algebra, it suffices to show that $q^{-1}(q(A))$ is a measurable subset of $X$.  
 But now by \eqref{qqA2}, $q^{-1}(q(A)) = A$, which is measurable. So $q(A)$ is a measurable set, and hence $h$ is a measurable kernel. 
 
 To see that $h$ inverts $q$, notice that for every $x\in X$ and every measurable $A\in\Sigma_X$,
 $$
 h(A|q(x)) = h(A|[x]) = 1_A(x) ,
 $$
 which is the identity kernel. Just as well, for each $[x]\in X/M$ and $B\in\Sigma_{X/M}$,
 $$
 q_*h(B|[x]) = h(q^{-1}(B)|[x]) = 1_{q^{-1}(B)}(y) = 1_B([x]) ,
 $$
 once again the identity kernel. 
\end{proof}

Now, usually the orbit space plays the role of classifying invariant observables, in the sense that an observable is invariant if and only if it descends to the orbit space in a well defined way. In our case, the invariant $\sigma$-algebra takes care of this without the need of actually taking the quotient. 
In general, especially if the cardinality of $M$ is large, the set-theoretic quotient of $X$ can be very badly behaved as a measurable space. Considering the object $X_\inv$ in the Markov category $\cat{Stoch}$, instead, one avoids having to deal with ``bad'' quotients. 
It is still true that in general the $\sigma$-algebra of $X_\inv$ does not separate points, but this is less of a problem in $\cat{Stoch}$ and $\cat{Stoch}_\det$, since those categories do not really deal with points, but rather, with ``points up to indistinguishability''.
The situation is analogous to what happens in homotopy theory when one takes ``weak quotients'' or other similar constructions, such as resolutions.

\end{document}